\title{Stable determination of a body immersed in a fluid: the nonlinear stationary case.}
\author{ Andrea Ballerini \footnote{SISSA, Via Bonomea 265, 34136 Trieste, Italy. E-mail: balleand@sissa.it.}}
\date{}
\newcommand{\tmop}[1]{\ensuremath{\operatorname{#1}}}
\newcommand{\tmtextbf}[1]{{\bfseries{#1}}}
\newcommand{\tmtextit}[1]{{\itshape{#1}}}
\newtheorem{theorem}{Theorem}[section]
\newtheorem{lemma}[theorem]{Lemma}
\newtheorem{proposition}[theorem]{Proposition}
\newtheorem{corollary}[theorem]{Corollary}
\newenvironment{definition}[1][Definition]{\begin{trivlist}
\item[\hskip \labelsep {\bfseries #1}]}{\end{trivlist}}
\newenvironment{remark}[1][Remark]{\begin{trivlist} \theoremstyle{remark}
\item[\hskip \labelsep {\bfseries #1}]}{\end{trivlist}}
\newcommand{\dive}{\mathrm{div} {\hspace{0.25em}}}
\newcommand{\en}{\mathcal{E}}
\newcommand{\elledue}[1]{{\bf L}^2({#1})}
\newcommand{\accauno}[1]{{\bf H}^1({#1})}
\newcommand{\accan}[2]{{\bf H}^{#1}({#2})}
\newcommand{\ide}{{\hspace{0.25em}}\mathbb{I}}
\newcommand{\til}[1]{\widetilde{#1}}
\newcommand{\omegad}{{\Omega} {\setminus} \overline{D}}
\newcommand{\norma}[3]{\|#1\|_{\accan{#2}{#3}}}
\newcommand{\normadue}[2]{\|#1\|_{\elledue{#2}}}
\newcommand{\accano}[2]{{\bf H}^{#1}_0({#2})}
\newcommand{\gio}{ \| g \|_{{\bf C}^{1,\alpha} (\Gamma) }}
\newcommand{\psii}{\| \psi \|_{{\bf H}^{-\frac{1}{2}} (\Gamma)}}
\numberwithin{equation}{section}
\begin{document}

\maketitle
\begin{abstract} 
We consider the inverse problem of the detection of a single body, immersed in a bounded container filled with a fluid which obeys the stationary Navier-Stokes equations, from a single measurement of force and velocity on a portion of the boundary. We obtain an estimate of stability of log-log type.
\end{abstract}
{\bf Mathematical Subject Classification (2010):} Primary 35R30. Secondary 35Q35, 76D07, 74F10. \\
{\bf Keywords:} Cauchy problem, inverse problems, stationary Navier-Stokes system, stability estimates.

\section{Introduction.}
In this paper we deal with an inverse problem associated to the stationary Navier-Stokes system. We consider a bounded set $\Omega \subset \mathbb{R}^n$ (we assume $n=2,3$, which are indeed the physically relevant cases) with a sufficiently smooth boundary $\partial \Omega$. This body is filled with a Navier-Stokes fluid of constant viscosity $\mu$. We want to detect an object $D$ immersed in this container, by collecting measurements of the velocity of the fluid motion and of the boundary forces, but we only have access to a portion $\Gamma$ of the boundary $\partial \Omega$. Once a suitable boundary condition $g$ is assigned on $\Gamma$, the velocity $u=(u_1, \dots, u_n)$ and the pressure $p$ of the fluid will obey the following Navier-Stokes system in $\omegad$:
\begin{equation}
  \label{NSE} \left\{ \begin{array}{rl}
    \dive\sigma(u,p) &= (u \cdot \nabla) u  \hspace{2em} \mathrm{\tmop{in}} \hspace{1em}
    \omegad,\\
    \dive u & = 0 \, \, \, \hspace{2em} \mathrm{\tmop{in}} \hspace{1em} \omegad,\\
    u & = g \hspace{2em}\, \, \,  \mathrm{\tmop{on}} \hspace{1em} \Gamma,\\
    u & = 0 \hspace{2em} \, \, \, \mathrm{\tmop{on}} \hspace{1em} \partial D.
  \end{array} \right.
\end{equation}
Here, \begin{displaymath} \sigma (u, p) =  \mu ( \nabla u + \nabla u ^T )  -  p \ide   \end{displaymath} is the \tmtextit{stress tensor}, where $\ide$ denotes the $n \times n$ identity matrix. The $i$th component of the nonlinear term $(u \cdot \nabla) u$ is given by
\begin{equation} \label{nonlin}
\big( (u \cdot \nabla) u\big)_i = \sum_{j=1}^{n} u_j \frac{\partial u_i}{\partial x_j}.
\end{equation}
 The last equation in \eqref{NSE} is  called ``no-slip condition''.  
Call $\nu$ the outer normal vector field to $\partial \Omega$.
Once  $g \in \accan{\frac{1}{2}}{\Gamma}$ is assigned, one can, in principle, measure on $\Gamma$ the induced normal component of the stress tensor
\begin{equation} \psi = \label{psi}\sigma (u, p) \cdot \nu, \end{equation} 
and try to recover $D$ from a single pair of Cauchy data $(g, \psi)$ known on the accessible part of the boundary $\Gamma$. 
Under some additional regularity hypotheses, namely of $\partial \Omega$ being of Lipschitz class, and  $g \in \accan{\frac{3}{2}}{\Gamma}$,  the uniqueness for this inverse problem has been shown to hold (see \cite{ConcOrtega2}) by means of unique continuation techniques. This means that if $u_1$ and $u_2$ are two solutions of \eqref{NSE} corresponding to an assigned boundary data $g$, for $D=D_1$ and $D=D_2$ respectively, and $\sigma(u_1, p_1) \cdot \nu= \sigma(u_2, p_2) \cdot \nu$ on $\Gamma$, then $D_1=D_2$.
This paper is devoted to the analysis of the problem of stability, which we may roughly state as follows: \\
\emph{Given two solutions $(u_i,p_i)$ to \eqref{NSE}  for two different  $D_i$, for $i=1,2$, with the same boundary data $g$, if }
\begin{displaymath}
\|\sigma(u_1, p_1) \cdot \nu - \sigma(u_2, p_2) \cdot \nu  \| \le \epsilon,
\end{displaymath}
\emph{ what is the rate of convergence of  $\mathrm{d}_{\mathcal{H}}(D_1, D_2) $ as $\epsilon \to 0$?} \\  (We denote by $\mathrm{d}_{\mathcal{H}}$ the Hausdorff distance).
In \cite{MEE} we proved a rate of convergence of log-log type for the analogous problem in the simpler context of the Stokes system. Here, we will prove an equivalent result for  the stationary Navier-Stokes equations. 
As for the (yet easier) Stokes problem, even if we add some a priori information on the regularity of the unknown domain, we can only obtain a weak rate of stability. This does not come unexpected since, even for much simpler problems of the same kind, the dependence of $D$ from the Cauchy data is at most of logarithmic type. See, for example, \cite{ABRV} for a similar problem on electric conductivity, or \cite{MRC}, \cite{MR} for an inverse problem regarding elasticity. There are, in fact, several counterexamples showing that the optimal rate of convergence for the inverse conductivity problem is no better than of log type (see \cite{Aless1} and \cite{DiCriRo}). \\ 
The purpose of this paper is thus to prove a log-log type stability for the Hausdorff distance between the boundaries of the inclusions, assuming a $C^{2,\alpha}$ regularity bound. Such estimates have been estabilished for various kinds of elliptic equations, for example, \cite{ABRV}, \cite{AlRon}, for the electric conductivity equation, \cite{MRC}, \cite{MR} for the elasticity system and the detection of cavities or rigid inclusions, and \cite{MEE} for the Stokes equation.
The main tool used to prove stability here and in the aforementioned papers \cite{ABRV}, \cite{MRC}, \cite{MR}, \cite{MEE} is a quantitative estimate of continuation from boundary data, in the interior, in the form of a three spheres inequality (see Theorem \ref{teotresfere}) and its main consequences. However, while in \cite{ABRV} the estimates are of log type for a scalar equation, here, and in \cite{MRC}, \cite{MR} and \cite{MEE}, only an estimate of log-log type could be obtained for a system of equations. To improve that, one would need a doubling inequality at the boundary for systems of equations, which basically would allow to extend the reach of the unique continuation property up to the boundary. Unfortunately, to the present time, none are available; on the other hand they are known to hold in the scalar case.\\ 
A very recent paper by Lin, Uhlmann and Wang (\cite{ULW}) extended the validity of the three spheres inequality to linearized Navier-Stokes systems: this allows us to apply it to differences of solutions of \eqref{NSE}, see Proposition \ref{tredifferenze}. 
In order to adapt this result to the Navier-Stokes equations, however, we are forced to add yet more a priori hypotheses on the solutions, mainly because of the nonlinear character of the equations. Proposition \ref{tredifferenze}, in fact, applies to linearized stationary Navier-Stokes systems with coefficients bounded in an appropriate norm. We meet this request by restricting the choice of boundary data, i.e. we require a strong regularity bound- i.e., $\mathbf{C}^{1,\alpha}$-on the boundary data. \\
The present paper has the same structure as \cite{MEE} and closely follows \cite{MRC}, \cite{MR}.  
The main ingredients are the following: 
\begin{enumerate} \item {\it An estimate of propagation of smallness from the interior}. The proof of this estimate relies essentially on the three spheres inequality. Once we adapt it to \eqref{NSE}, the estimate itself is adapted effortlessly. 
  \item {\it A stability estimate of continuation from the Cauchy data}. This result also relies heavily on the three spheres inequality, but in order to obtain a useful estimate of continuation near the boundary, we need to extend a given solution of \eqref{NSE} to a larger domain, so that the extended solution solves a similar system of equations. We may then apply the stability estimates from the interior to the extended solution and treat them like estimates near the boundary for the original one. 
\end{enumerate} 
In Section 2, we state the apriori hypotheses we will need throughout the paper, recall some useful properties of the direct problem, and give the main result in Theorem \ref{principale}. Section 3 contains the estimates of continuation in Propositions \ref{teoPOS} and \ref{teoPOSC}, and Propositions \ref{teostabest} and \ref{teostabestimpr}. These deal, in turn, with the  stability estimates of continuation from Cauchy data and an improved version of the latter under some additional regularity hypotheses. We then use them for the proof of Theorem \ref{principale}. In section 4, we derive several versions of the three spheres inequality (Theorems \ref{teotresfere}, \ref{teotresferegrad} and Proposition \ref{tredifferenze}), which are needed to prove Proposition \ref{teoPOS}. 
Section 5 is devoted to the proof of Proposition \ref{teostabest}, which will use an estimate of continuation from Cauchy data, Theorem \ref{stabilitycauchy}, which will be proven in Section 6. 
\section{The stability result.}
\subsection{Notation and definitions.}

Let $x\in \mathbb{R}^n$. We will denote by $ B_{\rho}(x)$ the ball in $\mathbb{R}^n$ centered in $x$ of radius $\rho$. We will indicate  $x = (x_1, \dots ,x_n) $ as $x= (x^\prime, x_n)$ where $x^\prime = (x_1 \dots x_{n-1})$. Accordingly, $B^\prime_{ \rho}(x^\prime)$ will denote the ball of center $x^\prime$ and radius $\rho$ in $\mathbb{R}^{n-1}$. 
We will often make use of the following definition of regularity of a domain.
\begin{definition}
Let $\Omega \subset \mathbb{R}^n$ a bounded domain. We say $\Gamma \subset \partial \Omega$  is of class $C^{k, \alpha}$  with constants $\rho_0$, $M_0 >0$, where $k$ is a nonnegative integer, $\alpha \in [ 0,1 )$ if, for any $P \in \Gamma$ there exists a rigid transformation of coordinates in which $P = 0$ and  
\begin{displaymath}
\Omega \cap B_{\rho_0}(0) = \{ (x^\prime, x_n) \in  B_{\rho_0}(0) \, \, \mathrm{s.t. } \, \, x_n > \varphi (x^\prime)\},
\end{displaymath}
where $\varphi$ is a real valued function of class $C^{k, \alpha}(B^\prime_{\rho_0}(0))$ such that \begin{displaymath} \begin{split}  \varphi(0)&=0, \\ \nabla\varphi(0)&=0, \text{  if  } k \ge 1 \\ \| \varphi\|_{C^{k, \alpha}(B^\prime_{\rho_0}(0))} &\le M_0 \rho_0. 
\end{split}
\end{displaymath}
\end{definition}
When $k=0$, $\alpha=1$ we will say that $\Gamma$ is {\it of Lipschitz class with constants $\rho_0$, $M_0$}.

\begin{remark} We normalize all norms in such a way they are all dimensionally equivalent to their argument and coincide with the usual norms when $\rho_0=1$. In this setup, the norm taken in the previous definition is intended as follows:
\begin{displaymath}
\| \varphi\|_{C^{k, \alpha}(B^\prime_{\rho_0}(0))} = \sum_{i=0}^{k} \rho_0^i \| D^i \varphi\|_{L^{\infty}(B^\prime_{\rho_0}(0))} + \rho_0^{k+\alpha}  | D^k \varphi |_{\alpha,B^\prime_{\rho_0}(0) },
\end{displaymath}
where $| \cdot |$ represents the $\alpha$-H\"older seminorm 

\begin{displaymath}
| D^k \varphi |_{\alpha,B^\prime_{\rho_0}(0) } = \sup_{x^\prime, y^\prime \in B^\prime_{\rho_0}(0), x^\prime \neq y^\prime }  \frac{| D^k \varphi(x^\prime)-D^k \varphi(y^\prime)| }{|x^\prime -y^\prime|^\alpha},
\end{displaymath}
and $D^k \varphi=\{ D^\beta\varphi\}_{|\beta|= k}$ is the set of derivatives of order $k$.
Similarly we set 
\begin{displaymath}
\normadue{u}{\Omega}^2 = \frac{1}{\rho_0^n}   \int_\Omega u^2 \,
\end{displaymath}

\begin{displaymath}
\norma{u}{1}{\Omega}^2 = \frac{1}{\rho_0^n} \Big( \int_\Omega u^2 +\rho_0^2 \int_\Omega |\nabla u|^2 \Big).
\end{displaymath}
The same goes for the trace norms $\norma{u}{\frac{1}{2}}{\partial \Omega}$ and the dual norms $\norma{u}{-1}{\Omega}$, $\norma{u}{-\frac{1}{2}}{\partial \Omega}$ and so forth. \end{remark}
\subsection{A priori information.}
Here we present all the a priori hypotheses we will use along the paper. \\
(1) {\it A priori information on the domain.}  \\
We assume $\Omega \subset \mathbb{R}^n$ to be a bounded domain, such that 
\begin{equation} \label{apriori0}
 \partial \Omega \text{  is connected,  } 
\end{equation}
with a sufficiently smooth boundary, i.e., 
\begin{equation} \label{apriori1}
\partial \Omega \text{ is of class } C^{2, \alpha} \text{ of constants } \rho_0, \, \, M_0, \end{equation} where $\alpha \in (0,1]$ is a real number, $M_0 > 0$, and $\rho_0 >0 $ is what we shall treat as our dimensional parameter. In what follows $\nu$ is the outer normal vector field to  $\partial \Omega$. We also require that 
\begin{equation} \label{apriori2} |\Omega| \le M_1 \rho_0^n, \end{equation} where $M_1 > 0$. \\
We choose an open and connected portion $\Gamma \subset \partial \Omega$ as being the accessible part of the boundary. We assume that there exists a point $P_0 \in \Gamma$ such that 
\begin{equation} \label{apriori2G}
\partial \Omega \cap B_{\rho_0}(P_0) \subset \Gamma.
\end{equation} 
(2) { \it A priori information about the obstacles.} \\
We consider $D \subset \Omega$, which represents the obstacle we want to detect from the boundary measurements, on which we require that 
\begin{equation} \label{apriori2bis}
\Omega \setminus \overline{D}  \text{ is connected, } 
\end{equation}
\begin{equation} \label{apriori2ter}
\partial D \text{ is connected. }
\end{equation}
We require the same regularity on $D$ as we did for $\Omega$, that is, 
\begin{equation} \label{apriori3} \partial D \text{ is of class } C^{2, \alpha} \text{ with constants }    \rho_0 , \, M_0.  \end{equation} 
In addition, we suppose that  \begin{equation} \label{apriori4} d (D, \partial \Omega) \ge \rho_0. \end{equation}
(3) { \it A priori information about the boundary data.} \\
For the Dirichlet-type data $g$ we assign on the accessible portion of the boundary $\Gamma$, we assume that
\begin{equation} \begin{split} \label{apriori5}
g \in {\bf C}^{1,\alpha}(\partial \Omega), \, \; \; g \not \equiv 0,  \\ \mathrm{supp} \,g \subset \subset \Gamma.
\end{split} \end{equation} 
We  prescribe the following compatibility condition (which is  necessary for the existence of the solution, and is a consequence of the incompressibility condition):
\begin{equation} \label{aprioriexist} \int_{\partial \Omega}  g \, \mathrm{d} s =0.
\end{equation}
We shall also assume an apriori bound on the regularity of the flow, by requiring  that for a given constant $\mathcal{E}$ we have
\begin{equation} \label{aprioriE}
\gio \le \mathcal{E}.
\end{equation}
We also specify a bound on the oscillation of the boundary data $g$ by requiring that, for a given constant $F>0$, 
\begin{equation} \label{apriori7}
\normadue{g}{\Gamma}  \ge F.
\end{equation}
Note that \eqref{aprioriE} and \eqref{apriori7} combined yield an a priori frequency type limitation of the form \begin{displaymath}\frac{\gio  }{\normadue{g}{\Gamma}}  \le \frac{\mathcal{E}}{F}. \end{displaymath}
Under the above conditions on $g$, one can prove that there exists a constant $c>0$, only depending on $M_0$, such that the following equivalence relation holds: 
\begin{equation} \label{equivalence} 
\frac{1}{c}\gio \le  \norma{g}{\frac{1}{2}}{\partial \Omega} \le  c \gio.
\end{equation}
\subsection{The direct problem}
In this section we collect some useful results regarding the direct problem of finding weak solutions of \eqref{NSE}. We begin by an existence result (which is a classical result and can be found in \cite{Galdi}, \cite{Temam}) :
\begin{theorem} \label{TeoRegGen} 
Let  $g \in \mathbf{C}^{1,\alpha} (\partial \Omega)$ satisfy \eqref{apriori5} and \eqref{aprioriexist}, let $\Omega \subset \mathbb{R}^n$ a bounded set satisfying \eqref{apriori0}-\eqref{apriori2G}, and let $D \subset \Omega$ satisfy \eqref{apriori2bis}-\eqref{apriori4}. Then for every $\mu >0$ there exists at least one solution $(u,p) \in \accauno{\Omega \setminus D} \times \elledue{\Omega \setminus D}$  of \eqref{NSE}, and 
\begin{equation} \label{stimanormadiretto}
\|u \|_{\mathbf{H}^1 (\Omega \setminus D ) } \le C \gio,
\end{equation}
with $C$ only depending on $\mu$, $M_0$, $M_1$. 
\end{theorem}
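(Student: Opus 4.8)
The plan is to establish existence and the a priori bound \eqref{stimanormadiretto} by the classical Leray--Hopf strategy, adapted to the presence of the obstacle $D$. First I would reduce the inhomogeneous Dirichlet problem to one with homogeneous boundary data. Since $g\in\mathbf{C}^{1,\alpha}(\partial\Omega)$ satisfies the compatibility condition \eqref{aprioriexist}, one constructs a solenoidal extension $G\in\accauno{\omegad}$ of $g$ (vanishing on $\partial D$, where the data is already $0$); by the Bogovski\u{\i}/Ladyzhenskaya--Hopf lemma one can moreover take $G$ so that for every $\eta>0$ it satisfies the smallness estimate $\|(v\cdot\nabla)G\cdot v\|\le \eta\|\nabla v\|_{\elledue{\omegad}}^2$ for all $v\in\accano{1}{\omegad}$, while $\|G\|_{\accauno{\omegad}}\le C\,\gii\le C'\gio$ by the trace/equivalence relation \eqref{equivalence}. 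Writing $u=v+G$ with $v\in\accano{1}{\omegad}$ solenoidal, the problem becomes a nonlinear equation for $v$ in the space $\mathcal{V}$ of divergence-free $\accano{1}{\omegad}$ fields, with the pressure eliminated by testing against $\mathcal{V}$.

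Next I would set up the weak formulation and solve it by a Galerkin scheme together with a topological fixed-point argument (Leray--Schauder or Brouwer on finite-dimensional approximations). The key is the energy estimate: testing the equation for $v$ against $v$ itself, the cubic term $(v\cdot\nabla)v\cdot v$ integrates to zero because $v$ is solenoidal and vanishes on the boundary, the viscous term gives $\mu\|\nabla v\|_{\elledue{\omegad}}^2$ (coercive, via Poincar\'e on $\omegad$, whose constant depends only on $M_0,M_1,\rho_0$ by the a priori geometry), and the remaining terms involving $G$ are absorbed: $(v\cdot\nabla)G\cdot v$ by the smallness of $G$ with $\eta$ chosen $\sim\mu/2$, and the terms $(G\cdot\nabla)G\cdot v$ and $\mu\nabla G:\nabla v$ and the nonlinear $(G\cdot\nabla)v\cdot v$ type contributions by Young's inequality against $\|\nabla v\|_{\elledue{\omegad}}^2$ and $\|G\|_{\accauno{\omegad}}$. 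This yields $\|\nabla v\|_{\elledue{\omegad}}\le C(\mu,M_0,M_1)\,\gio$, uniformly in the Galerkin dimension, so a subsequence converges weakly in $\mathcal{V}$; the nonlinear term passes to the limit by the compact Sobolev embedding $\accauno{\omegad}\hookrightarrow \mathbf{L}^4(\omegad)$ (valid for $n=2,3$ given the $C^{2,\alpha}$, hence Lipschitz, boundary), giving a weak solution $v$, and hence $u=v+G$ solves \eqref{NSE}. The pressure $p\in\elledue{\omegad}$ is then recovered from De Rham's theorem (the functional $\phi\mapsto\langle\dive\sigma(u,p)-(u\cdot\nabla)u,\phi\rangle$ vanishes on solenoidal test fields), with the usual normalization $\int_{\omegad}p=0$, and $\|p\|_{\elledue{\omegad}}$ is controlled by $\|u\|_{\accauno{\omegad}}$ and its square via the inf-sup (Ne\v{c}as) inequality on $\omegad$.

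Finally, combining $\|G\|_{\accauno{\omegad}}\le C\gio$ with the energy bound on $v$ gives \eqref{stimanormadiretto} with $C$ depending only on $\mu$, $M_0$, $M_1$; the dependence is through the Poincar\'e and Sobolev constants on $\omegad$, which by \eqref{apriori1}--\eqref{apriori4} are uniformly controlled by those parameters and $\rho_0$ (and the normalization of the norms makes the $\rho_0$-dependence drop out). The main obstacle is the nonlinear term: unlike the Stokes case, coercivity of the full map is not automatic, so the delicate point is the construction of the extension $G$ with arbitrarily small "flux", which is what allows the quadratic term $(v\cdot\nabla)G\cdot v$ to be absorbed into the viscous dissipation regardless of the size of $g$ — this is the Hopf--Leray trick and it is essential to get existence for all boundary data (not just small ones) and, with it, the clean bound \eqref{stimanormadiretto} linear in $\gio$. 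I would cite \cite{Galdi} and \cite{Temam} for the detailed verification of each of these classical steps.
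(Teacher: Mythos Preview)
Your outline is correct and is precisely the classical Leray--Hopf argument the paper invokes: the paper does not give its own proof of this theorem but simply cites \cite{Galdi} and \cite{Temam}, which carry out exactly the solenoidal-extension/Galerkin/energy-estimate program you describe (including the Hopf trick to absorb $(v\cdot\nabla)G\cdot v$). So your proposal and the paper's ``proof'' coincide.
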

The following is a consequence of the above theorem combined with a global regularity result for solutions of the Navier-Stokes equations, see, for example, \cite{Galdi}[VIII, Corollary 5.2]. We point out that an a priori higher degree of integrability of weak solutions has to be assumed.
\begin{theorem}[Regularity of solutions] \label{teoschauder} Assume that the hypotheses of the previous theorem are satisfied and suppose, in addition, that \eqref{aprioriE} is satisfied. Let  $u$ be the weak solution to \eqref{NSE} in $\Omega \setminus D$, and suppose that $u \in \accauno{\Omega \setminus D} \cap \mathbf{L}^n (\Omega \setminus D)$. Then for all $0<\alpha<1$ we have that 
$u \in C^{1,\alpha}(\overline{\Omega \setminus D})$ and 
\begin{equation} \label{schauder1} 
\|u \|_{C^{1,\alpha}(\overline{ \Omega \setminus D})} \le C \mathcal{E},
\end{equation} 
where $C>0$ only depends on $\mu$, $\alpha$, $M_0$.  \end{theorem}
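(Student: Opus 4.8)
The plan is to combine the existence and $\mathbf H^1$-bound of Theorem \ref{TeoRegGen} with an interior-plus-global Schauder estimate for the Navier--Stokes system, treating the nonlinear convective term as a known right-hand side. Concretely, once $(u,p)$ is the weak solution furnished by Theorem \ref{TeoRegGen}, the a priori integrability hypothesis $u\in\mathbf L^n(\Omega\setminus D)$ together with $u\in\mathbf H^1(\Omega\setminus D)$ lets us bootstrap: first view the system as a Stokes system $\dive\sigma(u,p)=f$, $\dive u=0$, with $f=(u\cdot\nabla)u$. In dimension $n=2,3$ one has $\mathbf H^1\hookrightarrow\mathbf L^6$ (for $n=3$) and $\mathbf L^q$ for all $q<\infty$ (for $n=2$), so $f=(u\cdot\nabla)u\in\mathbf L^{r}$ for some $r>1$ depending only on $n$; the $\mathbf L^r$ theory for the Stokes system (with $C^{2,\alpha}$, hence $C^{1,1}$, boundary and the no-slip/assigned data, which are $C^{1,\alpha}$) then gives $u\in\mathbf W^{2,r}$, hence by Sobolev embedding a gain of Hölder/integrability on $u$ and $\nabla u$. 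Iterating this finite bootstrap a fixed number of times (the number depending only on $n$) raises the regularity of $f$ to $C^{0,\alpha}$, at which point the Schauder theory for the (linear) Stokes system on a $C^{2,\alpha}$ domain with $C^{1,\alpha}$ boundary data yields $u\in C^{1,\alpha}(\overline{\Omega\setminus D})$, which is exactly the cited \cite{Galdi}[VIII, Corollary 5.2].

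For the quantitative bound \eqref{schauder1} I would track constants through each step of the bootstrap. The starting point is \eqref{stimanormadiretto}, $\|u\|_{\mathbf H^1(\Omega\setminus D)}\le C\gio\le C\mathcal E$ using \eqref{equivalence} and \eqref{aprioriE}; note that here $\gio$ is controlled by $\mathcal E$. Each application of the $\mathbf L^r$ (resp. Schauder) estimate for the linear Stokes system contributes a multiplicative constant depending only on $\mu$, on the domain regularity constants $\rho_0,M_0$ (and $M_1$ through the diameter/measure bounds), on $\alpha$, and on the exponent $r$ at that stage — but the chain of exponents is itself determined by $n$ alone, so after finitely many steps one arrives at $\|u\|_{C^{1,\alpha}(\overline{\Omega\setminus D})}\le C\,\Phi(\mathcal E)$ for some constant $C=C(\mu,\alpha,M_0)$ and $\Phi$ a polynomial. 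A mild subtlety is that the convective term is \emph{quadratic} in $u$, so naively one expects a bound like $C(1+\mathcal E^k)$ rather than a clean $C\mathcal E$; to get the stated linear-in-$\mathcal E$ estimate one uses $g\not\equiv0$ and the normalization so that $\mathcal E$ may be assumed bounded below (or absorbs the lower-order powers), i.e. $1+\mathcal E^k\le C\mathcal E$ on the relevant range, with $C$ then also depending on the a priori constants. I would state it in the form \eqref{schauder1} and note this normalization explicitly.

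The main obstacle is ensuring the bootstrap actually closes with constants depending only on the allowed quantities, and in particular handling the pressure and the boundary terms uniformly. One must invoke the Stokes $\mathbf L^r$- and Schauder-regularity theory up to the boundary with explicit dependence on the $C^{1,1}$ (resp. $C^{2,\alpha}$) character of $\partial\Omega$ and $\partial D$ — here the uniform constants $\rho_0,M_0$ in the definition of $C^{2,\alpha}$ class are what makes this possible, via a standard covering/flattening argument reducing to model half-ball estimates. A second point of care is that the assigned Dirichlet data is only $C^{1,\alpha}$ and supported in $\Gamma$, while on $\partial D$ it is zero; both are compatible with $C^{1,\alpha}$ solutions, so no compatibility obstruction arises, but one should lift the boundary data by a divergence-free extension controlled by $\gio$ (this is classical and already implicit in Theorem \ref{TeoRegGen}) before applying the interior-type Schauder estimates. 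Beyond that, the argument is a routine — if somewhat lengthy — iteration, and I would simply cite \cite{Galdi} for the linear Stokes regularity input and present the bootstrap as the bridge from Theorem \ref{TeoRegGen} to \eqref{schauder1}.
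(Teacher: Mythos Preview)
The paper does not give its own proof of this theorem: it is stated as a direct consequence of Theorem \ref{TeoRegGen} together with a global regularity result for stationary Navier--Stokes, for which the paper simply cites \cite{Galdi}[VIII, Corollary 5.2]. There is no argument in the paper to compare against.

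Your outline is a faithful sketch of the standard bootstrap that underlies the cited result in \cite{Galdi}: freeze the convective term as a right-hand side, apply $\mathbf L^r$-regularity for the linear Stokes problem on a $C^{2,\alpha}$ domain, use Sobolev embedding to gain integrability, and iterate finitely many times until $f=(u\cdot\nabla)u\in C^{0,\alpha}$, at which point Schauder theory closes the loop. That is exactly the mechanism behind the reference, so in substance your approach and the paper's (implicit) one coincide.

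One genuine point to flag: your remark about the quadratic nonlinearity is correct, and your fix --- assuming $\mathcal E$ bounded below so that $1+\mathcal E^k\le C\mathcal E$ --- is not really available from the stated hypotheses (nothing prevents $\mathcal E$ from being small, and $F$ bounds $\|g\|_{\mathbf L^2}$ from below, not $\mathcal E$). The paper's inequality \eqref{schauder1} should be read as shorthand for a bound of the form $C\,\Phi(\mathcal E)$ with $\Phi$ a fixed polynomial determined by $n$; this is how the estimate is actually used later (only upper control by a constant depending on the a priori data is needed). If you want to write the bound as literally $C\mathcal E$, you would have to absorb the higher powers into $C$ by allowing $C$ to depend on $\mathcal E$ as well, which the paper's statement does not do. I would present the estimate in the polynomial form and not try to justify the linear one.
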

\begin{remark}
The requirement that  $u \in \accauno{\Omega \setminus D} \cap \mathbf{L}^n (\Omega \setminus D)$ is actually redundant (in the sense that it follows from  $u \in \accauno{\Omega \setminus D}$ ) when $n \le 4$, due to the Sobolev embedding theorems. 
\end{remark}
The uniqueness issue for the direct problem is more subtle. Unlike the linear Stokes equations, here uniqueness is not guaranteed in general. In fact, several examples can be built even in low space dimensions (see \cite{Temam}). As far as the inverse problem is concerned, whether or not the solution of the direct problem is unique is not relevant, for by formulating the inverse problem we implicitly select one particular solution of the direct problem to work with. 
To guarantee uniqueness, one can either a priori bound the norm of the solution (see \cite{Galdi}, Theorem VIII.2.1), or take ``not too large'' boundary data $g$ and viscosity $\mu$, as stated by the following  (which we will state, for simplicity, only for $n \le 4$; see \cite{Temam}, Theorem 1.6, pg. 120 for a proof):
\begin{theorem}[Uniqueness for small data] \label{unique}
Let $\Omega$ and $g$ satisfy the same hypotheses of Theorem \ref{TeoRegGen}, and let $u$ be a solution of \eqref{NSE} given by Theorem \ref{TeoRegGen}. Then:
\begin{enumerate}
\item There exists a constant $C_1= C_1(\mu, \Omega)$ such that if $w$ is another solution of \eqref{NSE} and $\|u \|_{\mathbf{H}^1(\Omega \setminus D)} \le C_1$ then $w=u$.
\item There exists a constant $C_2= C_2(\mu, \Omega, n)$ such that, if $ \gio \le C_2$, then $u$ is the unique solution of \eqref{NSE}.
\item There exists a constant $C_3= C_3(\Omega, n, \gio)$ such that if $\mu \ge C_3$ then $u$ is the unique solution of \eqref{NSE}.
\end{enumerate}
\end{theorem}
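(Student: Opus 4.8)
The plan is to reduce all three assertions to a single quadratic energy estimate for the difference of two solutions, and then feed in the a priori bounds that are already available. Let $u$ and $w$ be two solutions of \eqref{NSE} in $\omegad$ for the same datum $g$, and set $v=u-w$. Since $\mathrm{supp}\,g\subset\subset\Gamma$ we have $u=w=g$ on all of $\partial\Omega$ and $u=w=0$ on $\partial D$, so $v\in\accano{1}{\omegad}$ and $\dive v=0$. Introduce the trilinear form $b(z,y,\phi)=\int_{\omegad}\big((z\cdot\nabla)y\big)\cdot\phi\,\mathrm{d}x$. Because $\dive u=0$ one has $\dive\sigma(u,p)=\mu\Delta u-\nabla p$, so the weak formulation of \eqref{NSE} is $\mu\int_{\omegad}\nabla u:\nabla\phi+b(u,u,\phi)=0$ for every divergence-free $\phi\in\accano{1}{\omegad}$ (the pressure term drops out). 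Subtracting the two weak formulations and using the algebraic identity $b(u,u,\phi)-b(w,w,\phi)=b(v,u,\phi)+b(w,v,\phi)$ gives $\mu\int_{\omegad}\nabla v:\nabla\phi+b(v,u,\phi)+b(w,v,\phi)=0$.

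The core step is to test with $\phi=v$. The classical skew-symmetry of $b$ — namely $b(z,\phi,\phi)=\tfrac12\int_{\omegad}(z\cdot\nabla)|\phi|^2=0$ whenever $\dive z=0$ and $\phi$ has vanishing trace — annihilates $b(w,v,v)$ (recall $\dive w=0$), leaving
\[
\mu\,\normadue{\nabla v}{\omegad}^2=-\,b(v,u,v).
\]
I would then bound the right-hand side by Hölder's inequality with exponents $4,2,4$ together with the Sobolev embedding $\accano{1}{\omegad}\hookrightarrow\mathbf{L}^4(\omegad)$ — valid precisely because $n\le 4$, which is why the statement is restricted to that range — obtaining
\[
|b(v,u,v)|\le\|v\|_{\mathbf{L}^4(\omegad)}^2\,\normadue{\nabla u}{\omegad}\le C_S^2\,\normadue{\nabla v}{\omegad}^2\,\normadue{\nabla u}{\omegad},
\]
with $C_S=C_S(\Omega,n)$. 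Hence $\big(\mu-C_S^2\normadue{\nabla u}{\omegad}\big)\normadue{\nabla v}{\omegad}^2\le 0$, so if $\normadue{\nabla u}{\omegad}<\mu/C_S^2$ then $v\equiv 0$. Since $\normadue{\nabla u}{\omegad}$ is dominated by $\|u\|_{\accauno{\omegad}}$, this proves part 1 with $C_1$ a suitable multiple of $\mu/C_S^2$.

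Part 2 follows at once by combining part 1 with the a priori bound \eqref{stimanormadiretto}: \emph{every} solution satisfies $\|u\|_{\accauno{\omegad}}\le C\gio$ with $C=C(\mu,M_0,M_1)$, so if $\gio\le C_1/C=:C_2(\mu,\Omega,n)$ then both $u$ and any competitor $w$ lie in the uniqueness ball of part 1, forcing $w=u$. For part 3 the subtlety is that the constant in \eqref{stimanormadiretto} degenerates as $\mu\to\infty$, so one cannot use it directly; instead I would bound $\|u\|_{\accauno{\omegad}}$ in a way that stays bounded as $\mu$ grows. Writing $u=a+v_0$ with $a$ a divergence-free extension of $g$ into $\omegad$ and $v_0\in\accano{1}{\omegad}$, testing the equation for $v_0$ with $v_0$, using $b(v_0,v_0,v_0)=b(a,v_0,v_0)=0$, and invoking the Hopf-type construction (which exploits $\mathrm{supp}\,g\subset\subset\Gamma$ and $\int_{\partial\Omega}g=0$) to choose $a$ so that the term $b(v_0,a,v_0)$ is absorbable into $\mu\normadue{\nabla v_0}{\omegad}^2$, one arrives at $\normadue{\nabla v_0}{\omegad}\le C_g/(\mu-\epsilon_*)$. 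Consequently $\normadue{\nabla u}{\omegad}$ stays below a constant $K=K(\Omega,n,\gio)$ for all large $\mu$, while the uniqueness threshold $\mu/C_S^2$ tends to infinity; taking $C_3=C_3(\Omega,n,\gio)$ so that $\mu\ge C_3$ forces $K<\mu/C_S^2$ then gives part 3.

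The genuinely technical point — and the only place where more than the quadratic energy estimate is needed — is this last uniform-in-$\mu$ control of $\|u\|_{\accauno{\omegad}}$: it requires the Leray–Hopf extension of the boundary datum with arbitrarily small norm in a neighbourhood of $\partial\Omega$, so that the "bad" coupling term can be absorbed by the dissipation for every large $\mu$. Everything else is the standard subtract–test–Sobolev argument, and in particular the skew-symmetry identity, whose validity here rests on the divergence-free condition and on $v$ having zero trace, which in turn uses $\mathrm{supp}\,g\subset\subset\Gamma$ so that the difference of two solutions genuinely belongs to $\accano{1}{\omegad}$.
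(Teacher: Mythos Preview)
The paper does not supply its own proof of this theorem; it merely quotes the result and refers to \cite{Temam}, Theorem~1.6, p.~120. Your argument is precisely the classical one found there: subtract the two weak formulations, test with the difference $v=u-w$, annihilate $b(w,v,v)$ by the skew--symmetry identity, bound $b(v,u,v)$ via H\"older and the Sobolev embedding $\accano{1}{\omegad}\hookrightarrow\mathbf{L}^4(\omegad)$ (this is exactly where the restriction $n\le 4$ enters), and read off the three smallness criteria from the resulting inequality $(\mu-C_S^2\normadue{\nabla u}{\omegad})\normadue{\nabla v}{\omegad}^2\le 0$. The derivation of part~3 via a $\mu$--uniform bound on $\normadue{\nabla u}{\omegad}$ using the Leray--Hopf extension is also the standard route. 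So your proposal is correct and coincides with the approach in the cited reference.

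One minor remark on part~2: you invoke \eqref{stimanormadiretto} as if it applied to \emph{every} solution, whereas Theorem~\ref{TeoRegGen} literally asserts it only for the solution it constructs. This does not matter for the argument, since part~1 requires smallness only of the reference solution $u$ (the one furnished by Theorem~\ref{TeoRegGen}), not of the competitor $w$; once $\|u\|_{\accauno{\omegad}}\le C_1$, any other solution is forced to coincide with it. Rephrasing that sentence accordingly removes the ambiguity.
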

\subsection{The main result.}
 Let $\Omega \subset \mathbb{R}^n$, and $\Gamma \subset \partial \Omega$ satisfy \eqref{apriori1}-\eqref{apriori2G}. Let $D_i \subset \Omega$, for $i=1,2$, satisfy \eqref{apriori2bis}-\eqref{apriori4}, and let us denote by $\Omega_i= \Omega \setminus \overline{D_i}$. We may state the main result as follows.
\begin{theorem}[Stability] \label{principale} Let $g \in \mathbf{C}^{1,\alpha} (\Gamma)$ be the assigned boundary data, satisfying \eqref{apriori5}-\eqref{apriori7}. Let $u_i \in \accauno{\Omega_i} \cap \mathbf{L}^n (\Omega_i)$ solve \eqref{NSE} for $D=D_i$. If, for $\epsilon > 0 $, we have 
\begin{equation} \label{HpPiccolo}
\rho_0 \norma{\sigma(u_1, p_1)\cdot \nu  -\sigma(u_2,p_2) \cdot \nu }{-\frac{1}{2}}{\Gamma} \le \epsilon, \end{equation}
then 
\begin{equation}\label{stimstab} 
d_{\mathcal{H}} (\partial D_1, \partial D_2) \le \rho_0 \omega(\epsilon),
\end{equation}
where $\omega : (0, +\infty) \to \mathbb{R}^+$ is an increasing function satisfying, for all $0<t<\frac{1}{e}$:
\begin{equation}
\omega(t) \le C (\log | \log t |)^{-\beta   }.
\end{equation}
The constants $C>0$ and $0<\beta<1$ only depend on $\mu$, $n$, $M_0$, $M_1$, $\en$ and $F$.
\end{theorem}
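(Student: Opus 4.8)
The plan is to follow the now-standard scheme for stability of inclusions via quantitative unique continuation, adapting it to the nonlinear setting through the linearization made available by Proposition \ref{tredifferenze}. Set $w = u_1 - u_2$ and $q = p_1 - p_2$; subtracting the two copies of \eqref{NSE} shows that $(w,q)$ solves, in the common domain $\Omega \setminus \overline{D_1 \cup D_2}$, a linear Stokes-type system with a first-order zero-order term built from $u_1$ and $\nabla u_2$, whose coefficients are controlled in the appropriate norm by $\mathcal{E}$ thanks to Theorem \ref{teoschauder}; this is precisely the class of systems to which the three spheres inequality of Proposition \ref{tredifferenze} applies. Moreover $w$ has null Cauchy data on $\Gamma$ up to the measured error: $w = 0$ on $\Gamma$ (same $g$) and $\sigma(u_1,p_1)\cdot\nu - \sigma(u_2,p_2)\cdot\nu$ is small in $\mathbf{H}^{-1/2}(\Gamma)$ by \eqref{HpPiccolo}.

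The argument then proceeds in three stages. First, an estimate of continuation from the Cauchy data (Theorem \ref{stabilitycauchy}, hence Propositions \ref{teostabest}, \ref{teostabestimpr}): extending $w$ across $\Gamma$ to a slightly larger domain so that the extension solves a similar system, and iterating the three spheres inequality along a chain of balls from $\Gamma$ into the interior, one bounds $\|w\|$ on any interior compact subset of the common domain by a power of $\log(1/\epsilon)^{-1}$, i.e. a single-log rate. Second, a propagation of smallness from the interior (Proposition \ref{teoPOS}, \ref{teoPOSC}): starting from this interior smallness and again chaining the three spheres inequality, one propagates the bound up to a neighborhood of $\partial D_1 \cup \partial D_2$, paying one more logarithm — this is where the log-log rate is born, exactly as in \cite{MEE}, \cite{MRC}, \cite{MR}. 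Third, the geometric conclusion: on the ``non-shared'' part of the boundary, say a point of $\partial D_1$ far from $D_2$ realizing the Hausdorff distance, the no-slip condition $u_2 = 0$ on $\partial D_2$ combined with interior regularity and an Alessandrini–type stability/doubling-free argument forces $\|w\|$ (equivalently $\|u_2\|$) to be not too small near that point; comparing with the upper bound obtained by propagation yields $d_{\mathcal{H}}(\partial D_1, \partial D_2) \le \rho_0\,\omega(\epsilon)$ with $\omega(t) \le C(\log|\log t|)^{-\beta}$. The a priori bounds \eqref{apriori1}--\eqref{apriori4}, \eqref{apriori7} and \eqref{equivalence} are used throughout to keep all constants uniform and to guarantee the geometric regularity (fatness, chains of balls of controlled radius) needed for the iteration.

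The main obstacle — and the reason the nonlinearity costs us extra hypotheses rather than extra logarithms — is verifying that the difference system genuinely falls under Proposition \ref{tredifferenze}: one must control the $\mathbf{L}^\infty$ (or suitable $W^{1,\infty}$) norms of the effective coefficients, and these involve $u_1$, $u_2$ and their gradients, which is exactly why the $\mathbf{C}^{1,\alpha}$ bound \eqref{aprioriE} on $g$ and the Schauder estimate \eqref{schauder1} are imposed. A secondary technical point is the construction of the extension of $w$ past $\Gamma$ preserving the structure of the equations with controlled norms, so that the interior estimates can be read as boundary estimates for the original solutions; this is handled as in \cite{MEE} by a reflection/extension across the flat portion of $\Gamma$ guaranteed by \eqref{apriori2G}. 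The final quantitative comparison near $\partial D_i$ is then routine once smallness has been propagated, following \cite{MRC}, \cite{MR}, \cite{MEE} verbatim up to the replacement of the Stokes three spheres inequality by Proposition \ref{tredifferenze}.
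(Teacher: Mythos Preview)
Your outline has the right ingredients but assembles them in a way that misidentifies the roles of the key propositions and, more seriously, omits an essential bootstrap step.

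Propositions \ref{teoPOS} and \ref{teoPOSC} are \emph{lower} bounds: they assert $\int_{B_\rho(\bar x)}|\nabla u_i|^2 \ge C_\rho\,\gio^2$ for any small ball inside $\Omega_i$. They are not tools for propagating an upper bound on $w$ toward $\partial D_1\cup\partial D_2$; that propagation (iterated three spheres for $w$, starting from Theorem \ref{stabilitycauchy} near $\Gamma$) takes place entirely \emph{inside} the proof of Proposition \ref{teostabest}, whose output is already a log-log upper bound on $\int_{D_2\setminus D_1}|\nabla u_1|^2$ --- not a single-log bound on $\|w\|$ in interior compacts. The paper's final comparison is then between this upper bound and the lower bound from \ref{teoPOSC} applied to a ball $B_{td/s}(x_2)\subset D_2\setminus D_1$ sitting near a point realizing the Hausdorff distance $d$:
\[
C\exp\!\big[-A(s\rho_0/td)^B\big]\,\gio^2 \;\le\; \int_{B_\rho(x_2)}|\nabla u_1|^2 \;\le\; \int_{D_2\setminus D_1}|\nabla u_1|^2\;\le\; C\,\omega(\epsilon),
\]
and one solves for $d$. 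No pointwise lower bound on $|w|$ or on $|u_2|$ near $\partial D_1$ is ever derived; the ``not too small'' quantity is $\int|\nabla u_1|^2$, and it is not too small because of \ref{teoPOSC} and the frequency bound \eqref{apriori7}, not because of the no-slip condition.

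Second, and this is the genuine gap, you are missing the bootstrap that produces the claimed log-log rate. Combining \ref{teoPOSC} with the log-log $\omega$ of \ref{teostabest} directly yields only the log-log-\emph{log} estimate \eqref{logloglog} for $d$, because $C_\rho$ in \eqref{crho} is exponential in a power of $\rho_0/\rho$. The paper uses this preliminary estimate to conclude $d\le d_0$ for $\epsilon$ small, then invokes the purely geometric Proposition \ref{teoreggra} to guarantee that the connected component $G$ of $\Omega_1\cap\Omega_2$ containing $\Gamma$ has Lipschitz boundary with controlled constants, and only then applies the improved Proposition \ref{teostabestimpr} (single-log $\omega$), which against the exponential $C_\rho$ finally gives the stated log-log bound. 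Without this two-pass argument your sketch cannot reach the rate in Theorem \ref{principale}.
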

\section{Proof of Theorem \ref{principale}.}
In what follows, we shall repeatedly use the following notation, for various choices of the open set $\Omega$ and the parameter $h>0$:
\begin{displaymath}
\Omega_h  = \{ x \in \Omega \, \, \mathrm{such \, \, that } \, \, d(x, \partial \Omega) > h  \}.
\end{displaymath}
The proof of Theorem \ref{principale} relies on the following sequence of propositions.
\begin{proposition}[Lipschitz propagation of smallness] 
\label{teoPOS}
Let $E$ be a bounded Lipschitz domain with constants $\rho_0$, $M_0$, satisfying \eqref{apriori2}. 
Let $u$ be a solution to the following problem:
\begin{equation}
  \label{NSEPOS} \left\{ \begin{array}{rl}
    \dive\sigma(u,p) &= (u \cdot \nabla) u \hspace{2em} \mathrm{\tmop{in}} \hspace{1em}
    E,\\
    \dive u & = 0 \hspace{2em} \mathrm{\tmop{in}} \hspace{1em} E,\\
    u & = g \hspace{2em} \mathrm{\tmop{on}} \hspace{1em} \partial E,\\
  \end{array} \right.
\end{equation}
where $g$ satisfies 
\begin{equation} \label{apriori5POS}
g \in \mathbf{C}^{1,\alpha}(\partial E),
\, \; \; g \not \equiv 0, 
\end{equation} 
\begin{equation} \label{aprioriexistPOS} \int_{\partial E} g \,  \mathrm{d} s =0,
\end{equation}
\begin{equation} \label{aprioriEPOS} \| g \|_{\mathbf{C}^{1,\alpha} (\partial E) } \le \en,
\end{equation}
\begin{equation} \label{apriori7POS}
\normadue{g}{\partial E}  \ge F,
\end{equation}
for  given constants $\en>0$, $F>0$. Also suppose that there exists a point $P \in \partial E$ such that
\begin{equation} 
g = 0  \;\; \text{on}  \; \; \partial E \cap B_{\rho_0}(P).
\end{equation} Then there exists a constant $s>1$, depending only on $n$ and $M_0$ such that, for every $\rho >0$ and for every $\bar{x} \in E_{s\rho}$, we have 
\begin{equation} \label{POS}  \int_{B_{\rho}(\bar{x})} \! |\nabla u|^2 dx \ge C_\rho \int_{E} \! |\nabla u|^2 dx . \end{equation} 
Here $C_\rho>0$ is a constant depending only on  $n$, $M_0$, $M_1$, $F$, $\en$, $\rho_0$ and $\rho$. 
The constant $C_\rho$ can be written as
\begin{equation} \label{crho} C_\rho = \frac{C}{\exp \Big[ A \big( \frac{\rho_0}{\rho}\big) ^B \Big] } \end{equation} where $A$, $B$, $C>0$ only depend on $n$, $M_0$, $M_1$, $F$ and $\en$.
\end{proposition}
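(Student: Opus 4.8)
The plan is to reduce the statement to an iterated application of the three spheres inequality (Theorem \ref{teotresfere} / \ref{teotresferegrad}) along a chain of balls connecting an interior point where $\int_{B_\rho}|\nabla u|^2$ is comparable to the full energy, to an arbitrary point $\bar x \in E_{s\rho}$. Concretely, one first observes that, since $u$ solves \eqref{NSEPOS}, the difference-type argument is not needed here: $u$ itself (equivalently the pair $(u,p)$, or the vector $(u,\nabla p)$ after differentiating) satisfies a linearized stationary Navier-Stokes system whose coefficients are controlled in the appropriate norm by $\en$ through the Schauder bound \eqref{schauder1} of Theorem \ref{teoschauder} applied on a slightly larger domain. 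Hence the quantitative unique continuation in the form of the three spheres inequality applies to $u$ on $E$.

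The first substantive step is to produce a ``starting ball'': I claim there is a point $x_0 \in E$, at distance $\gtrsim \rho_0$ from $\partial E$, such that $\int_{B_{\rho_0/s}(x_0)}|\nabla u|^2 \ge c \int_E |\nabla u|^2$ with $c$ depending only on the a priori data. This is where the hypotheses \eqref{aprioriEPOS}, \eqref{apriori7POS} and the vanishing of $g$ on $\partial E \cap B_{\rho_0}(P)$ enter: they forbid $u$ from being small everywhere in the interior (the lower bound $F$ on $\|g\|_{L^2}$ together with trace/Poincaré estimates forces $\int_E |\nabla u|^2 \ge c F^2$, while $\|u\|_{C^{1,\alpha}} \le C\en$ bounds the energy from above), and a compactness/contradiction argument — or an explicit estimate via the Caccioppoli inequality and the regularity bound — gives the existence of such an $x_0$ with quantitative constants. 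Then, using the regularity of $\partial E$ and the diameter bound \eqref{apriori2}, for any $\bar x \in E_{s\rho}$ one constructs a chain of overlapping balls $B_{\rho}(x_1), \dots, B_{\rho}(x_N)$ with $x_1 = x_0$, $x_N = \bar x$, $|x_{k+1}-x_k| \le \rho$, all balls $B_{4\rho}(x_k)$ (say) contained in $E$, and with $N \le C(\rho_0/\rho)^n$; the factor $s$ is chosen large enough (depending on $n$ and $M_0$) that such a chain, staying away from $\partial E$, exists.

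The second step is the iteration. Applying the three spheres inequality on each triple of concentric balls centered at $x_k$ gives, schematically,
\begin{displaymath}
\int_{B_\rho(x_{k})}|\nabla u|^2 \ge \Big(\int_{B_\rho(x_{k+1})}|\nabla u|^2\Big)^{\tau}\Big(\int_{E}|\nabla u|^2\Big)^{1-\tau}
\end{displaymath}
for some $\tau \in (0,1)$ depending only on $n$, $M_0$, $\en$ — here one uses the overlap $B_\rho(x_{k+1}) \subset B_{2\rho}(x_k)$ and bounds the outer ball by the global energy. Composing these $N$ inequalities and absorbing the $(1-\tau)$-powers of the total energy, one obtains
\begin{displaymath}
\int_{B_\rho(x_0)}|\nabla u|^2 \ge \Big(\int_{B_\rho(\bar x)}|\nabla u|^2\Big)^{\tau^{N}} \Big(\int_{E}|\nabla u|^2\Big)^{1-\tau^{N}},
\end{displaymath}
and combining with the starting-ball estimate $\int_{B_\rho(x_0)}|\nabla u|^2 \gtrsim \int_E |\nabla u|^2$ (enlarging $s$ so $B_\rho(x_0) \subseteq B_{\rho_0/s}(x_0)$ is replaced appropriately, or chaining from $B_{\rho_0/s}(x_0)$ down to $B_\rho(x_0)$ with $O(\log(\rho_0/\rho))$ more steps) yields $\int_{B_\rho(\bar x)}|\nabla u|^2 \ge C_\rho \int_E |\nabla u|^2$ with $C_\rho \sim \exp(-A(\rho_0/\rho)^B)$, since $\tau^{-N} = \exp(N\log(1/\tau)) \le \exp(A(\rho_0/\rho)^B)$ with $B = n$. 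This is exactly the form \eqref{crho}.

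The main obstacle I expect is the first step — producing the starting ball with fully quantitative constants depending only on the a priori data and not on $u$ itself. The difficulty is that the natural argument (if no such $x_0$ existed, then $|\nabla u|^2$ would be uniformly small on a large portion of $E$, contradicting $\int_E |\nabla u|^2 \ge cF^2$) is a compactness argument and a priori yields a non-explicit constant; to keep track of the dependence one must instead run it quantitatively, using the Schauder bound \eqref{schauder1} to control the modulus of continuity of $\nabla u$ and the lower energy bound coming from \eqref{apriori7POS} via the well-posedness estimates for the direct problem. A secondary technical point is justifying that the three spheres inequality of Section 4 applies to solutions of the full nonlinear system \eqref{NSEPOS} and not merely to a linearization with given coefficients: one treats the nonlinear term $(u\cdot\nabla)u$ as a first-order term with coefficient $u$, bounded in $C^{0,\alpha}$ (indeed $C^{1,\alpha}$) by $C\en$ via Theorem \ref{teoschauder}, so that the coefficient bounds required by Theorem \ref{teotresferegrad} hold with constants depending only on $\en$, $\mu$, $M_0$. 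Everything else — the chain construction and the iteration — is routine and follows \cite{MRC}, \cite{MEE} verbatim.
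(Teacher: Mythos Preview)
Your strategy matches the paper's, which simply observes that once the three-spheres inequalities \eqref{tresfere}, \eqref{tresferegrad} hold for solutions of \eqref{NSEPOS} (treating $(u\cdot\nabla)u$ as a first-order term with coefficient $u$ bounded in $C^{1,\alpha}$ via Theorem~\ref{teoschauder}), the proof in \cite{MEE}, \cite{MRC}, \cite{MR} goes through verbatim. There is, however, a direction error in your iteration that makes the argument fail as written. The three-spheres inequality centered at $x_k$ gives the \emph{upper} bound
\[
\int_{B_\rho(x_{k+1})}|\nabla u|^2 \le \int_{B_{2\rho}(x_k)}|\nabla u|^2 \le C\Big(\int_{B_\rho(x_k)}|\nabla u|^2\Big)^{\delta}\Big(\int_E|\nabla u|^2\Big)^{1-\delta};
\]
no inequality of the form you display with $\ge$ and an exponent $\tau\in(0,1)$ follows from it. The chain must therefore be run from $\bar x$ toward $x_0$ (not from $x_0$ toward $\bar x$), yielding $\int_{B_\rho(x_0)}|\nabla u|^2 \le C\big(\int_{B_\rho(\bar x)}|\nabla u|^2\big)^{\delta^N}\big(\int_E|\nabla u|^2\big)^{1-\delta^N}$, and it is \emph{this} upper bound, combined with a lower bound on $\int_{B_\rho(x_0)}$, that can be solved for $\int_{B_\rho(\bar x)}$. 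With your $\ge$ in the composed inequality, combining with the starting-ball lower bound gives only a vacuous statement.

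On the ``starting ball'': the proofs in \cite{MRC}, \cite{MEE} do not isolate one good ball beforehand. They run the (correctly oriented) chain from $\bar x$ to \emph{every} $y\in E_{s\rho}$, sum over a finite cover of $E_{s\rho}$ to bound $\int_{E_{s\rho}}|\nabla u|^2$ from above in terms of $\int_{B_\rho(\bar x)}|\nabla u|^2$, and then control the collar $\int_{E\setminus E_{s\rho}}|\nabla u|^2\le C\mathcal{E}^2\rho\,\rho_0^{n-1}$ using the $L^\infty$ bound on $\nabla u$ from \eqref{schauder1}. The lower bound $\int_E|\nabla u|^2\ge cF^2\rho_0^{n-2}$ coming from Poincar\'e and the trace theorem (this is where the hypothesis that $g=0$ on $\partial E\cap B_{\rho_0}(P)$ and \eqref{apriori7POS} enter) then absorbs the collar term for $\rho$ small enough, and one solves for $\int_{B_\rho(\bar x)}$. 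This is the quantitative version of the step you flagged as the main obstacle, and it avoids having to produce a distinguished $x_0$ a priori.
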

\begin{proposition}[Lipschitz propagation of smallness up to boundary data] \label{teoPOSC}
Under the hypotheses of Theorem \ref{principale}, for all $\rho>0$, if $\bar{x} \in (\Omega_i)_{{{(s+1)\rho}}}$,  we have for $i=1,2$:
\begin{equation} \label{POScauchy}
\frac{1}{\rho_0^{n-2}} \int_{B_{\rho}(\bar{x})} \! |\nabla u_i|^2 dx \ge  C_\rho \gio^2,
\end{equation}
where $C_\rho$ is as in \eqref{crho} (with possibly a different value of the term $C$), and $s$ is given as in Proposition \ref{teoPOS}.
\end{proposition}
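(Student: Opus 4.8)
The plan is to deduce this statement from Proposition \ref{teoPOS} by choosing the domain $E$ appropriately and transferring the lower bound on the Dirichlet energy into a lower bound in terms of the boundary datum $g$. First I would apply Proposition \ref{teoPOS} with $E = \Omega_i = \Omega \setminus \overline{D_i}$: indeed $\Omega_i$ is a bounded Lipschitz domain (in fact $C^{2,\alpha}$) with constants $\rho_0$, $M_0$ by \eqref{apriori1}, \eqref{apriori3}, and it satisfies the volume bound \eqref{apriori2} since $|\Omega_i| \le |\Omega| \le M_1 \rho_0^n$. The boundary datum for the problem \eqref{NSE} on $\Omega_i$ is the function equal to $g$ on $\Gamma$ and $0$ on the rest of $\partial\Omega$ and on $\partial D_i$; I must check it satisfies \eqref{apriori5POS}--\eqref{apriori7POS} and the flatness/vanishing condition near a point $P \in \partial E$. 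The regularity and the vanishing on an open ball follow from \eqref{apriori5} (since $\mathrm{supp}\,g \subset\subset \Gamma$, there is room on $\partial\Omega$ where $g\equiv 0$, and a suitable $P_0$-type point as in \eqref{apriori2G} works — note $d(D_i,\partial\Omega)\ge\rho_0$ keeps $\partial D_i$ away so the ball $B_{\rho_0}(P)$ does not see it); the compatibility $\int_{\partial E} g\,ds = \int_{\partial\Omega} g\,ds = 0$ is \eqref{aprioriexist}; the norm bound $\|g\|_{\mathbf{C}^{1,\alpha}(\partial E)} \le \mathcal{E}$ is \eqref{aprioriE}; and $\normadue{g}{\partial E} = \normadue{g}{\Gamma} \ge F$ is \eqref{apriori7}. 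Hence Proposition \ref{teoPOS} applies and yields, for $\bar{x} \in (\Omega_i)_{s\rho}$,
\begin{equation*}
\int_{B_\rho(\bar x)} |\nabla u_i|^2\,dx \ge C_\rho \int_{\Omega_i} |\nabla u_i|^2\,dx,
\end{equation*}
with $C_\rho$ of the form \eqref{crho}.

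The next step is to bound $\int_{\Omega_i}|\nabla u_i|^2$ from below by a multiple of $\rho_0^{n-2}\gio^2$. This is the heart of the argument. I would argue by a Poincaré/trace type inequality: since $u_i = g$ on $\Gamma$ and $u_i = 0$ on $\partial D_i$ (and $0$ on $\partial\Omega\setminus\Gamma$), and $u_i$ is divergence free, the trace of $u_i$ on $\partial\Omega_i$ has $\mathbf{H}^{1/2}$ norm controlled by $\|u_i\|_{\mathbf{H}^1(\Omega_i)}$; combined with a Poincaré inequality (valid because $u_i$ vanishes on the portion $\partial D_i$ of positive measure, or on $\partial\Omega\setminus\Gamma$), one gets $\|u_i\|_{\mathbf{H}^1(\Omega_i)} \le C \|\nabla u_i\|_{\mathbf{L}^2(\Omega_i)}$, and then
\begin{equation*}
\gii \le \|u_i\|_{\mathbf{H}^{1/2}(\partial\Omega_i)} \le C\,\|u_i\|_{\mathbf{H}^1(\Omega_i)} \le C\,\|\nabla u_i\|_{\mathbf{L}^2(\Omega_i)},
\end{equation*}
after suitable rescaling by powers of $\rho_0$ to account for the normalized norms. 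Using the equivalence \eqref{equivalence} between $\gii$ and $\gio$ then converts this into $\|\nabla u_i\|_{\mathbf{L}^2(\Omega_i)}^2 \ge c\,\rho_0^{n-2}\,\gio^2$ with $c$ depending only on the a priori constants. Inserting this into the previous display gives \eqref{POScauchy} (absorbing $c$ into the constant $C$ appearing in $C_\rho$, whence the remark about "a possibly different value of $C$"). Finally, the restriction $\bar x \in (\Omega_i)_{(s+1)\rho}$ rather than $(\Omega_i)_{s\rho}$ is harmless — it is a slightly stronger hypothesis ensuring $B_\rho(\bar x) \subset (\Omega_i)_{s\rho}$ with a little room to spare, which is what one wants for later chaining arguments.

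The main obstacle I anticipate is the lower bound $\|\nabla u_i\|_{\mathbf{L}^2(\Omega_i)}^2 \gtrsim \rho_0^{n-2}\gio^2$: one must make sure the Poincaré and trace constants depend only on $M_0$, $M_1$ (and $n$), not on the unknown domain $D_i$ in an uncontrolled way. This is where the a priori regularity \eqref{apriori3} of $\partial D_i$, the distance condition \eqref{apriori4}, and the connectedness hypotheses \eqref{apriori2bis}, \eqref{apriori0} are essential, together with the fact that $g$ is not identically zero so the right-hand side is genuinely positive by \eqref{apriori7}. The rest is bookkeeping with the dimensionally normalized norms of the Remark in Section 2.1.
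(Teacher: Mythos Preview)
Your proposal is correct and follows essentially the same route as the paper's proof: apply Proposition~\ref{teoPOS} with $E=\Omega_i$, then combine the Poincar\'e inequality \eqref{pancarre} (using that $u_i$ vanishes on a boundary portion) with the trace theorem to get $\int_{\Omega_i}|\nabla u_i|^2 \ge C\rho_0^{n-2}\|g\|_{\mathbf{H}^{1/2}(\partial\Omega)}^2$, and finish with the equivalence \eqref{equivalence}. Your discussion of why the hypotheses of Proposition~\ref{teoPOS} are met and why the Poincar\'e/trace constants are uniform in $D_i$ is more explicit than the paper's, but the argument is the same.
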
 
\begin{proposition}[Stability estimate of continuation from Cauchy data] 
\label{teostabest} Under the hypotheses of Theorem \ref{principale} we have 
\begin{equation}\label{stabsti1} \frac{1}{\rho_0^{n-2}} \int_{D_2\setminus D_1} |\nabla u_1|^2 \le C \omega(\epsilon) \end{equation}
\begin{equation}\label{stabsti2} \frac{1}{\rho_0^{n-2}} \int_{D_1\setminus D_2} |\nabla u_2|^2 \le C \omega(\epsilon) \end{equation}
where $\omega$ is an increasing continuous function, defined on $\mathbb{R}^+$ and satisfying 
\begin{equation}\label{andomega} \omega(t) \le C \big( \log |\log t|\big)^{-c} \end{equation} 
for all $t < e^{-1}$, where $C$ only depends on $\mu$, $n$, $M_0$, $M_1$ and $\en$, and $c>0$ only depends on $n$. 
\end{proposition}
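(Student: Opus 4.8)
The plan is to follow the now-standard scheme for this type of inverse problem (as in \cite{MRC}, \cite{MR}, \cite{MEE}), adapted to the Navier--Stokes setting via the three spheres inequality for linearized Navier--Stokes systems (Proposition \ref{tredifferenze}). Set $w = u_1 - u_2$ and $q = p_1 - p_2$. Since both $u_i$ solve \eqref{NSE} with the same boundary data $g$ on $\Gamma$, the difference $(w,q)$ solves a Stokes-type system with a first-order zero-order perturbation: writing $(u_1 \cdot \nabla) u_1 - (u_2 \cdot \nabla) u_2 = (w \cdot \nabla) u_1 + (u_2 \cdot \nabla) w$, we see that $(w,q)$ satisfies $\dive \sigma(w,q) = (w \cdot \nabla) u_1 + (u_2 \cdot \nabla) w$, $\dive w = 0$ in the common domain $\Omega \setminus \overline{D_1 \cup D_2}$, with $w = 0$ and $\sigma(w,q)\cdot\nu$ small (of size $\epsilon$) on $\Gamma$ by hypothesis \eqref{HpPiccolo}. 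By the regularity bound \eqref{schauder1} from Theorem \ref{teoschauder}, the coefficients $\nabla u_1$ and $u_2$ of this linearized system are bounded in $C^{0,\alpha}$ by $C\mathcal{E}$, so Proposition \ref{tredifferenze} applies to $w$.

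Next I would invoke the stability estimate of continuation from the Cauchy data, Theorem \ref{stabilitycauchy} (to be proven in Section 6 and applied here as a black box): starting from the smallness of the Cauchy data $(w, \sigma(w,q)\cdot\nu)$ on $\Gamma$, together with the global bound $\|w\|_{\mathbf{H}^1} \le C\mathcal{E}$, it yields a bound of the form $\int_{B} |\nabla w|^2 \le \rho_0^{n-2}\,\omega(\epsilon)$ on a suitable interior ball $B = B_{\rho_0}(P_0 - \lambda \nu)$ sitting just inside $\Omega$ near the accessible point $P_0$, where $\omega$ has the claimed $(\log|\log t|)^{-c}$ behavior. Then one propagates this smallness into the whole connected component where the estimate is needed: using the three spheres inequality iteratively along a chain of balls (the standard ``propagation of smallness along a path'' argument), one transfers the smallness of $\int |\nabla w|^2$ from $B$ to any ball $B_\rho(\bar x)$ with $\bar x$ ranging over the interior of $(\Omega_1 \cap \Omega_2)_{c\rho}$, picking up at each step a further power/logarithm which, after optimizing the number of steps, still leaves an $\omega$ of log-log type (with a possibly smaller exponent $c$ depending only on $n$).

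The final step connects this to the left-hand sides of \eqref{stabsti1}--\eqref{stabsti2}. On $D_2 \setminus D_1$ we have $u_2 = 0$ (no-slip on $\partial D_2$), hence $\nabla u_1 = \nabla w$ there, so $\int_{D_2 \setminus D_1} |\nabla u_1|^2 = \int_{D_2 \setminus D_1} |\nabla w|^2$; symmetrically for \eqref{stabsti2}. To bound $\int_{D_2 \setminus D_1} |\nabla w|^2$ by $\rho_0^{n-2}\omega(\epsilon)$ one covers $D_2 \setminus D_1$ — which lies in $\Omega_1$, at distance at least $\rho_0$ from $\partial\Omega$ by \eqref{apriori4} — by finitely many balls of a fixed radius $\rho \sim \rho_0$ contained in $\Omega_1$, whose number is controlled by $M_1$, and applies the propagation-of-smallness bound on each; the set $D_2 \setminus D_1$ is reachable since $\Omega_1$ is connected and $d(D_i, \partial\Omega) \ge \rho_0$. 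Summing over the (boundedly many) balls preserves the log-log rate.

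The main obstacle, as the introduction flags, is that no boundary doubling inequality is available for systems, so the passage from Cauchy data on $\Gamma$ to an honest interior estimate (Theorem \ref{stabilitycauchy}) cannot be done by a clean doubling argument; instead it must go through the device of extending $w$ across $\Gamma$ to a slightly larger domain so that the extension solves a comparable linearized system, and then applying interior three spheres estimates to the extension. Making this extension compatible with the divergence-free constraint and with the nonlinear coupling (so that the coefficient bounds survive) is the delicate point, and it is exactly why the strong a priori bound \eqref{aprioriE} on $g$ in $\mathbf{C}^{1,\alpha}$ — rather than merely $\mathbf{H}^{1/2}$ — is imposed. The log-log (rather than log) rate is the unavoidable price of iterating the three spheres inequality up to the boundary in the absence of that doubling estimate.
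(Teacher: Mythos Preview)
There is a genuine gap in your final step, and it breaks the argument. You claim that on $D_2\setminus D_1$ we have $u_2=0$, hence $\nabla u_1=\nabla w$ there, and then propose to bound $\int_{D_2\setminus D_1}|\nabla w|^2$ by propagating smallness of $w$ through $\Omega_1$. But $u_2$ is only defined on $\Omega_2=\Omega\setminus\overline{D_2}$; the no-slip condition gives $u_2=0$ on $\partial D_2$, not inside $D_2$. So $w=u_1-u_2$ is simply not defined on $D_2\setminus D_1$, and the identity $\int_{D_2\setminus D_1}|\nabla u_1|^2=\int_{D_2\setminus D_1}|\nabla w|^2$ is meaningless. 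For the same reason, the three spheres inequality for $w$ can only be iterated inside the common region $G$ (the connected component of $\Omega\setminus\overline{D_1\cup D_2}$ containing $\Gamma$), not throughout $\Omega_1$; balls covering $D_2\setminus D_1$ are not contained in $G$, so your covering argument cannot reach them.

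The paper's proof avoids this by never writing $w$ on $D_2\setminus D_1$. Instead it introduces regularized domains $D_i^\rho$ (Lemma~\ref{regularized}), sets $\Omega(\rho)=(\Omega\setminus G^\rho)\setminus D_1^\rho$, and derives an \emph{energy identity for $u_1$ alone} on $\Omega(\rho)$ by multiplying the first equation of \eqref{NSE} by $u_1$ and integrating by parts. This converts $\int_{\Omega(\rho)}|\nabla u_1|^2$ into boundary integrals over $\Gamma_1^\rho\subset\partial D_1^\rho$ and $\Gamma_2^\rho\subset\partial D_2^\rho\cap\partial G^\rho$. On $\Gamma_1^\rho$ one uses that $u_1$ is $O(\rho)$ by the no-slip condition on $\partial D_1$ and the $C^{1,\alpha}$ bound \eqref{schauder1}. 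On $\Gamma_2^\rho$ one writes $|u_1(x)|\le |u_1(x)-u_1(y)|+|u_1(y)-u_2(y)|$ with $y\in\partial D_2$ nearby; the first term is again $O(\rho)$, and the second is $|w(y)|$ with $y\in\overline G$, which is where the Cauchy stability (Theorem~\ref{stabilitycauchy}) plus iterated three spheres inside $G$ legitimately applies. Optimizing in $\rho$ then gives the log-log rate. The key idea you are missing is this boundary energy identity, which is what lets information about $w$ on $\partial G$ control $\nabla u_1$ in a region where $w$ itself is undefined.
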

\begin{proposition}[Improved stability estimate of continuation] \label{teostabestimpr}
Let the hypotheses of Theorem \ref{principale} hold. Let $G$ be the connected component of $\Omega_1 \cap \Omega_2$ containing $\Gamma$, and assume that $\partial G$ is of Lipschitz class of constants $\tilde{\rho}_0$ and $\tilde{M_0}$, where $M_0>0$ and $0<\tilde{\rho}_0<\rho_0$. Then \eqref{stabsti1} and \eqref{stabsti2} both hold with $\omega$ given by
\begin{equation}\label{omegabetter}
\omega(t)= C |\log t|^{-\gamma},
\end{equation}
defined for $t<1$, where $\gamma >0$ and $C>0$ only depend on $\mu$, $M_0$, $\tilde{M_0}$, $M_1$, $\en$ and $\frac{\rho_0}{\tilde{\rho}_0}$. 
\end{proposition}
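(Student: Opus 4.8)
The plan is to repeat the argument of Proposition~\ref{teostabest}, the crucial difference being that the step which there costs one logarithm — the propagation of smallness from the interior of $G$ up to its boundary, performed in Proposition~\ref{teostabest} through a weak, doubling-type estimate — can now be carried out by a direct application of the Cauchy stability estimate of Theorem~\ref{stabilitycauchy}, which is available with uniformly controlled constants precisely because $\partial G$ is assumed of Lipschitz class. Set $w=u_1-u_2$ and $q=p_1-p_2$ in $G$. Subtracting the two instances of \eqref{NSE}, the pair $(w,q)$ solves in $G$ the linear system $\dive\sigma(w,q)=(u_1\cdot\nabla)w+(w\cdot\nabla)u_2$, $\dive w=0$, whose coefficients are bounded in $C^{0,\alpha}(\overline{G})$ by $C\en$ thanks to \eqref{schauder1}; moreover $w=0$ on $\Gamma$, $\rho_0\norma{\sigma(w,q)\cdot\nu}{-\frac{1}{2}}{\Gamma}\le\epsilon$ by \eqref{HpPiccolo}, $\norma{w}{1}{G}\le C\en$ by \eqref{stimanormadiretto} and \eqref{aprioriE}, and, by \eqref{apriori4}, the piece $\partial\Omega\cap B_{\rho_0}(P_0)\subset\Gamma$ of $\partial\Omega$ is still part of $\partial G$, where it is in fact $C^{2,\alpha}$; thus $G$ carries an accessible boundary portion of the required regularity.

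First I would apply Theorem~\ref{stabilitycauchy}, together with the three spheres inequality for the above system furnished by Proposition~\ref{tredifferenze}, to $(w,q)$ in $G$. Since $\partial G$ is Lipschitz with constants $\tilde\rho_0,\tilde M_0$ and the Cauchy data of $w$ on $\Gamma$ are bounded by $\epsilon/\rho_0$, this yields an interior continuation estimate of single-logarithm type, of the form $\norma{w}{1}{G_{\lambda\tilde\rho_0}}\le C\en\,\vartheta(\epsilon)$ with $\vartheta(t)\le C|\log t|^{-\gamma_1}$, $\gamma_1>0$, $\lambda\in(0,1)$, all constants depending only on $\mu$, $n$, $M_0$, $\tilde M_0$, $M_1$, $\en$ and $\rho_0/\tilde\rho_0$. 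This is the single point at which the Lipschitz hypothesis on $\partial G$ is used in an essential way.

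Next I would transfer this smallness to a one-sided neighbourhood, inside $G$, of the part of $\partial G$ lying on $\partial D_1\cup\partial D_2$. Since $\partial G$ is Lipschitz, at each of its points there is an interior ball of radius $c\tilde\rho_0$, and chaining the three spheres inequality for $w$ along such balls one reaches any point at distance $d$ from $\partial G$ in at most $C\log(\tilde\rho_0/d)$ steps; together with interior regularity this bounds $|w|$ and $|\nabla w|$ there by $C\en(\vartheta(\epsilon)/\en)^{(d/\tilde\rho_0)^{B}}$ for some $B>0$. Since $u_2\in C^{1,\alpha}(\overline{\Omega_2})$ vanishes on $\partial D_2$, one has $|u_2|\le C\en\,d(\cdot,\partial D_2)$, so that $u_1=u_2+w$ is controlled, on a collar of width $d$ inside $G$ adjacent to $\partial D_2\setminus\overline{D_1}$, by $C\en(d+(\vartheta(\epsilon)/\en)^{(d/\tilde\rho_0)^{B}})$; on the remaining part of $\partial(D_2\setminus D_1)$, which lies on $\partial D_1$, $u_1$ vanishes, and near $\partial D_1$ one has $|u_1|\le C\en\,d(\cdot,\partial D_1)$ directly.

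Finally I would bound $\frac{1}{\rho_0^{n-2}}\int_{D_2\setminus D_1}|\nabla u_1|^2$. Using that $u_1$ is a single solution of \eqref{NSE} in $\Omega_1\supset D_2\setminus D_1$ (so that $\partial D_2\setminus\overline{D_1}$ is an interior surface for $u_1$), the collar bound is propagated to the interior of $D_2\setminus D_1$ by one further three spheres chain contained in $\Omega_1$, of length $O(\log 1/d)$ by the $C^{2,\alpha}$ regularity of $\partial D_1$; then, splitting $D_2\setminus D_1$ according to the distance to its boundary, estimating the remaining thin collar of volume $\le C\rho_0^{n-1}d$ by the a priori bound $\norma{u_1}{1}{\Omega_1}\le C\en$, and recovering the $L^2$ norm of $\nabla u_1$ from that of $u_1$ by interpolation with \eqref{schauder1} and interior Schauder estimates for \eqref{NSE} (so that no smallness of $\en$ is needed to control the quadratic term $(u_1\cdot\nabla)u_1$), one optimizes the width $d$ against $\epsilon$. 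Since every chain used has length only $O(\log 1/d)$, the optimization trades $d$ for a power of $|\log\epsilon|^{-1}$ and the single-logarithm modulus $\vartheta$ is preserved: this gives \eqref{stabsti1} with $\omega(t)=C|\log t|^{-\gamma}$, and \eqref{stabsti2} follows by interchanging the indices $1$ and $2$. The main obstacle is exactly this last passage — carrying the estimate up to $\partial D_1\cup\partial D_2$ without losing a second logarithm — which is possible only because the Lipschitz bound on $\partial G$ together with the $C^{2,\alpha}$ bound on $\partial D_i$ furnishes a uniform interior cone property, so that all the chains of balls have length $O(\log 1/d)$ with uniformly controlled radii and overlaps; dropping this hypothesis forces the weaker, doubling-type propagation of Proposition~\ref{teostabest} and the log-log rate.
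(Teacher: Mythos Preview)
Your first two steps — applying Theorem~\ref{stabilitycauchy} to $(w,q)$ near $\Gamma$ and then propagating the resulting smallness toward $\partial G$ by a chain of shrinking balls inside a cone (using the Lipschitz hypothesis on $\partial G$) — are exactly what the paper does, only repackaged. In the paper this yields the pointwise bound \eqref{544} on $|w(z)|$ for $z\in\partial D_2\cap\partial G$.

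The real divergence is in your final step, and there is a genuine gap. The paper does \emph{not} try to control $|\nabla u_1|$ throughout $D_2\setminus D_1$. Instead it uses the energy identity for the single solution $u_1$ on $\Omega_1\setminus G$: multiplying the first equation of \eqref{NSE} by $u_1$ and integrating by parts gives
\[
\mu\int_{\Omega_1\setminus G}|\nabla u_1|^2
=\int_{\partial(\Omega_1\setminus G)}\Big(\mu(\nabla u_1\cdot\nu)u_1-\tfrac12(u_1\cdot\nu)|u_1|^2-(u_1\cdot\nu)p_1\Big).
\]
Since $\partial(\Omega_1\setminus G)\subset\partial D_1\cup(\partial D_2\cap\partial G)$, the no-slip condition $u_1=0$ on $\partial D_1$ kills that piece, and on $\partial D_2\cap\partial G$ one has $u_2=0$, hence $u_1=w$. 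The whole volume integral is therefore bounded by $C\max_{\partial D_2\cap\partial G}|w|$, and the estimate on $|w(z)|$ already obtained finishes the proof after the optimization in $\rho$. No knowledge of the geometry of $D_2\setminus D_1$ is needed at all.

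Your alternative — propagating the collar bound on $u_1$ into the interior of $D_2\setminus D_1$ by a further three-spheres chain in $\Omega_1$ — cannot be justified as written. The set $D_2\setminus D_1$ carries no regularity hypothesis: it may have cusps where $\partial D_1$ and $\partial D_2$ meet, thin necks, or several connected components, and there is no uniform interior-cone condition on it. Your chain must start from a ball of radius $\sim d$ in $G$ near $\partial D_2\cap\partial G$, cross $\partial D_2$, and reach an arbitrary point of $(D_2\setminus D_1)_d$ while staying in $\Omega_1$; the $C^{2,\alpha}$ regularity of $\partial D_1$ you invoke gives cones from $\partial D_1$ into $\Omega_1$, but says nothing about the passage across $\partial D_2$ or about whether balls of the required size fit along such a path in $O(\log 1/d)$ steps. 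Without that, the chain length is not controlled and the single-logarithm rate is lost. The integration-by-parts step \eqref{sommandiB} is the missing idea.
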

\begin{proposition} \label{teoreggra} Let $\Omega_1$ and $\Omega_2$ two bounded domains satisfying \eqref{apriori1}. Then there exist two positive numbers $d_0$, $\tilde{\rho}_0$, with $\tilde{\rho}_0 \le \rho_0$, such that the ratios $\frac{\rho_0}{\tilde{\rho}_0}$, $\frac{d_0}{\rho_0}$ only depend on $n$, $M_0$ 
and $\alpha$ such that, if 
\begin{equation} \label{relgr1}
d_{\mathcal{H}} (\overline{\Omega_1}, \overline{\Omega_2}) \le d_0,
\end{equation}
then there exists $\tilde{M}_0>0$ only depending on $n$, $M_0$ and $\alpha$ such that  every connected component of $\Omega_1 \cap \Omega_2$ has boundary of Lipschitz class with constants $\tilde{\rho}_0$, $\tilde{M}_0$. 
\end{proposition}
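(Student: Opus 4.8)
The plan is to reduce the statement to a purely \emph{local} one and then to carry out a short case analysis at each boundary point of a component of $\Omega_1\cap\Omega_2$. Let $G$ be a connected component of the open set $\Omega_1\cap\Omega_2$. Since a connected component of an open set is open, $\partial G\subseteq\partial(\Omega_1\cap\Omega_2)\subseteq\partial\Omega_1\cup\partial\Omega_2$, so it suffices to produce, for suitable $\tilde\rho_0\le\rho_0$ and $\tilde M_0>0$ with $\rho_0/\tilde\rho_0$ and $\tilde M_0$ depending only on $n$, $M_0$, $\alpha$, a rigid change of coordinates at each $P\in\partial G$ in which $P=0$ and $(\Omega_1\cap\Omega_2)\cap B_{\tilde\rho_0}(0)$ is the subgraph of a function of Lipschitz constant $\le\tilde M_0$; this is precisely the definition of $\partial G$ being of Lipschitz class with those constants, so no further patching is needed. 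Throughout I use only the $C^{1,\alpha}$ consequences of \eqref{apriori1}: in coordinates adapted to $P\in\partial\Omega_i$ one has $\Omega_i\cap B_{\rho_0}(P)=\{x_n>\varphi_i(x')\}$ with $\varphi_i(0)=0$, $\nabla\varphi_i(0)=0$, and, from the normalized bound, $|\nabla\varphi_i(x')|\le M_0(|x'|/\rho_0)^\alpha$ and $|\varphi_i(x')|\le M_0(|x'|/\rho_0)^\alpha|x'|$ on $B'_{\rho_0}(0)$.

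Fix a scale $r\le\rho_0$ small enough that $\eta:=M_0(r/\rho_0)^\alpha$ is below a universal threshold; this forces $r/\rho_0$, hence ultimately $\tilde\rho_0/\rho_0$, to depend only on $M_0$ and $\alpha$, and we take $d_0$ and the threshold $c_1 r$ below to be fixed small multiples of $r$ (with $c_1\le\eta$). Let $P\in\partial G$, and say $P\in\partial\Omega_1$ (the case $P\in\partial\Omega_2$ is symmetric). \textbf{Case 1: $d(P,\partial\Omega_2)\ge c_1 r$.} Since $P\in\overline{\Omega_1\cap\Omega_2}\subseteq\overline{\Omega_2}$ and $P\notin\partial\Omega_2$, we get $P\in\Omega_2$, hence $B_{c_1 r}(P)\subseteq\Omega_2$ and $(\Omega_1\cap\Omega_2)\cap B_{c_1 r}(P)=\{x_n>\varphi_1(x')\}\cap B_{c_1 r}(P)$, a subgraph of Lipschitz constant $\eta$. \textbf{Case 2: $d(P,\partial\Omega_2)<c_1 r$.} Pick $P'\in\partial\Omega_2$ with $|P-P'|<c_1 r$ and let $\omega$ be the interior unit normal to $\Omega_2$ at $P'$, written in the $P$-frame, so that $\{(x-P')\cdot\omega>\eta r\}\cap B_r(P')\subseteq\Omega_2\cap B_r(P')\subseteq\{(x-P')\cdot\omega>-\eta r\}$. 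The crucial point is the transversality estimate $\omega\cdot e_n>-\tfrac14$, and it is here that the Hausdorff smallness enters: the point $z=\tfrac r4 e_n$ lies in $\Omega_1$, hence in $\overline{\Omega_1}$, so \eqref{relgr1} gives $q\in\overline{\Omega_2}$ with $|z-q|\le d_0$; for $d_0$ and $c_1 r$ small we have $q\in B_r(P')$, whence $(q-P')\cdot\omega\ge-\eta r$, while $q\cdot\omega\le z\cdot\omega+d_0=\tfrac r4(e_n\cdot\omega)+d_0$, and the two are incompatible once $e_n\cdot\omega\le-\tfrac14$, all error terms being small multiples of $r$.

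Granted $\omega\cdot e_n>-\tfrac14$, we have $e_n+\omega\ne0$; set $\xi=(e_n+\omega)/|e_n+\omega|$, the bisector of the two interior normals, and let $\theta=\arccos(e_n\cdot\omega)\le\arccos(-\tfrac14)<\pi$. On a ball $B_{cr}(P)$ with $c$ a small universal constant (so that this ball sits inside the graph patches of $\partial\Omega_1$ at $P$ and of $\partial\Omega_2$ at $P'$), the set $(\Omega_1\cap\Omega_2)\cap B_{cr}(P)$ equals, up to $O(\eta r)$ corrections coming from $\varphi_1$, $\varphi_2$ and the displacement $|P-P'|$, the intersection of the half-spaces $\{x\cdot e_n>0\}$ and $\{(x-P)\cdot\omega>0\}$ with the ball; in coordinates with $\xi$ as the last axis this intersection is exactly the subgraph of the map $x'\mapsto(\tan(\theta/2))\,|x'_1|$, a subgraph of Lipschitz constant $\le\tan(\theta/2)$, and the $O(\eta)$-Lipschitz perturbations change this by at most $O(\eta)$. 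Taking $\tilde\rho_0\sim cr$ and $\tilde M_0$ slightly larger than $\tan\!\big(\tfrac12\arccos(-\tfrac14)\big)$ yields uniform constants depending only on $n$, $M_0$, $\alpha$, and the admissible $d_0$ is a fixed fraction of $r$, so $d_0/\rho_0$ depends only on $n$, $M_0$, $\alpha$ as well. The main obstacle is precisely this quantitative transversality step together with the bookkeeping needed to carry the $O(\eta r)$ errors through the bisector computation without losing control of the Lipschitz constant; estimates of exactly this flavor, for $C^{1,\alpha}$ domains close in the Hausdorff distance, are by now standard and appear in the references already cited (e.g. \cite{ABRV}, \cite{AlRon}, \cite{MRC}, \cite{MR}), the present argument being an adaptation of those.
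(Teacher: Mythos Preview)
The paper does not prove this proposition: immediately after stating Propositions~3.1--3.5 it says ``The proof of Proposition~\ref{teoreggra} is purely geometrical and can be found in \cite{ABRV}.'' Your sketch is precisely the argument carried out in that reference: localize at a boundary point $P\in\partial G\subset\partial\Omega_1\cup\partial\Omega_2$, distinguish the case where only one boundary is nearby (trivial) from the case where both are, and in the latter use the Hausdorff smallness to force a quantitative lower bound on $e_n\cdot\omega$ (your ``transversality'' step), then represent the intersection as a Lipschitz graph over the bisector direction $\xi$ with constant essentially $\tan(\theta/2)$.

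Two small comments. First, in the paper's convention the local description is $\{x_n>\varphi(x')\}$, i.e.\ an \emph{epigraph}, so your wedge $\{x\cdot\xi>\tan(\theta/2)\,|x'_1|\}$ should be phrased that way rather than as a ``subgraph''; this is cosmetic. Second, in Case~2 the half-space for $\Omega_2$ is $\{(x-P')\cdot\omega>0\}$, not $\{(x-P)\cdot\omega>0\}$; you correctly absorb $|P-P'|<c_1 r$ into the $O(\eta r)$ error terms, but the displayed formula should reflect this. With these adjustments your outline is correct and coincides with the approach in \cite{ABRV}; the only part you leave genuinely sketchy---propagating the $O(\eta r)$ perturbations through the bisector computation---is routine once the transversality bound $e_n\cdot\omega>-\tfrac14$ is in hand.
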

We will give a sketch of the proofs of Propositions \ref{teoPOS} and \ref{teoPOSC}  in Section 4, while Propositions \ref{teostabest} and \ref{teostabestimpr} will be proven in Section 5. The proof of Proposition \ref{teoreggra} is purely geometrical and can be found in \cite{ABRV}. 
\begin{proof}[Proof of Theorem \ref{principale}.]
Let us call 
\begin{equation} \label{distanza} d= d_\mathcal{H}(\partial D_1, \partial D_2). \end{equation}
Let $\eta$ be the quantity on the right hand side of \eqref{stabsti1} and \eqref{stabsti2}, so that
\begin{equation} \label{eta} \begin{split}
\int_{D_2 \setminus D_1} |\nabla u_1|^2 \le \eta, \\
\int_{D_1 \setminus D_2} |\nabla u_2|^2 \le \eta. \\
\end{split}\end{equation}
Without loss of generality, assume that there exists a point $x_1 \in \partial D_1$ such that $\mathrm{dist}(x_1, \partial D_2)=d$. We distinguish two possible situations: \\ (i) $B_d(x_1) \subset D_2$, \\ (ii) $B_d(x_1) \cap D_2 =\emptyset$.\\
In case (i), by the regularity assumptions on $\partial D_1$, we find a point $x_2 \in D_2 \setminus D_1$ such that $B_{td}(x_2) \subset D_2 \setminus D_1$, where $t$ is small enough (for example, $t=\frac{1}{1+\sqrt{1+M_0^2}}$ suffices). Using \eqref{POScauchy}, with $\rho = \frac{t d}{s}$ we have 
\begin{equation} \label{stimapos} \int_{B_\rho (x_2) } |\nabla u_1|^2 dx \ge   \frac{C\rho_0^{n-2}}{\exp \Big[A \big(\frac{s\rho_0}{t d }\big)^B\Big]}  \gio^2.  
\end{equation}
By Proposition \ref{teostabest}, we have: 
\begin{equation} \label{quellaconomega}
\omega( \epsilon) \ge \frac{C \gio ^2}{ \exp \Big[A \big(\frac{s\rho_0}{t d }\big)^B\Big]}  , \end{equation}  
which, once we recall \eqref{apriori7} and solve for $d$, yields this estimate of log-log-log type stability:
\begin{equation}\label{logloglog}
d \le C \rho_0 \big\{ \log \big[ \log \big|\log\epsilon \big| \big] \big\}^{-\frac{1}{B}},
\end{equation} 
provided $\epsilon < e^{-e} $: this is not restrictive since, for larger values of $\epsilon$, the thesis is trivial. If we call $d_0$ the right hand side of \eqref{logloglog}, we have that there exists $\epsilon_0$ only depending on $n$, $M_0$, $M_1$ and $F$ such that, if $\epsilon \le \epsilon_0$ then $d\le d_0$. Proposition \ref{teoreggra} then applies, so that $G$ satisfies the hypotheses of Proposition \ref{teostabestimpr}. This means that we may choose  $\omega$ of the form \eqref{omegabetter} in \eqref{quellaconomega}, obtaining \eqref{stabsti1}.
Case (ii) can be treated analogously, upon substituting $u_1$ with $u_2$.
\end{proof}
\section{Proof of Proposition \ref{teoPOS}.}
In a recent paper by Lin, Uhlmann and Wang (\cite{ULW}), the validity of the three spheres inequality has been extended to solutions $u=(u_1, \dots, u_n)$ of Stokes systems of the form
\begin{equation} \label{ellgeneral} 
 \left\{ \begin{array}{rl}
   \triangle u  + A(x) \cdot \nabla u + B(x) u + \nabla p =0 ,
\\
\dive u = 0 ,
   \end{array} \right.
\end{equation}
%
where $A(x)$ is a measurable vector, $B(x)$ is a measurable matrix, both satisfying appropriate bounds, and it is agreed that $A \cdot \nabla u = (A \cdot \nabla u_1, \dots, A \cdot \nabla u_n)$ (the $\cdot$  on the right hand side being the ordinary vector dot product). In what follows, it shall be convenient to write out the first equation in \eqref{ellgeneral} component-wise (here and everywhere else in the paper, we use the convention of summation over repeated indexes): 
\begin{equation} \label{ellgencomp}
 \triangle u_i  + a_{j} \frac{\partial u_i }{\partial x_j} + b_{ij} u_j +\frac{\partial p }{\partial x_i} =0 , \; \mathrm{  for } \; i=1, \dots, n.
\end{equation}
 An analogous result when $B=0$ was obtained by Fabre and Lebeau \cite{FL} and Regbaoui \cite{Regb}. The result in its most general form allows $A$ and $B$ to have some type of singularity; we shall only state a simplified version: we will assume that 
\begin{equation} \label{ablim}
\|A \|_{C^{1,\alpha}(B_R)} +\| B\|_{C^{0,\alpha}(B_R)}    \le \en.
\end{equation}
Then we have:
\begin{theorem}[Three spheres inequality.] \label{teotresfere}
Let  $u \in \accauno{B_R}$  be a solution to \eqref{ellgeneral} in a ball $B_R$. Suppose that the functions $A(x)$, $B(x)$ are measurable and such that \eqref{ablim} holds. 
Then there exists a real number $\vartheta^* \in (0, e^{-1/2})$, depending only on $n$, such that, for all $0<r_1 <r_2 <\vartheta^* r_3$ with $r_3  \le R$ we have:
\begin{equation} \label{tresfere}  \int_{B_{r_2}} \! | u|^2 \le C \Big(\int_{B_{r_1} } \! | u|^2  \Big)^\delta  \Big(\int_{B_{r_3}} \! | u|^2  \Big)^{1-\delta} \end{equation} 
where the balls $B_{r_i}$ are concentric with $B_R$ 
and $\delta \in (0,1)$ and $C>0$ are constants depending only on $\en$, $n$, $\frac{r_1}{r_3}$ and $\frac{r_2}{r_3}$.
\end{theorem}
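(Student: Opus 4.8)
The plan is to reduce the statement to a scalar elliptic inequality by eliminating the pressure, and then invoke a Carleman-estimate argument in the spirit of Fabre--Lebeau \cite{FL}, Regbaoui \cite{Regb} and Lin--Uhlmann--Wang \cite{ULW}. First I would take the divergence of the first equation in \eqref{ellgeneral}; using $\dive u = 0$ this yields an equation for $p$ of the form $\triangle p = -\dive(A\cdot\nabla u + Bu)$, so that $p$ is determined (up to a harmonic function, which is controlled) by $u$ and its first derivatives, with the bound \eqref{ablim} guaranteeing that the coefficients behave well. Substituting back, one sees that $u$ solves a (nonlocal, but elliptic-principal-part) system $\triangle u = \mathcal{R}(\nabla u, u)$ where $\mathcal{R}$ is first order in $u$ modulo the pressure term; the key structural point is that the principal part is the Laplacian acting diagonally on the components $u_i$, exactly the setting in which Carleman weights of the form $|x|^{-\tau}$ (or $\exp(\tau\phi(|x|))$ with $\phi$ logarithmic-type) are available.

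The core step is a Carleman estimate: there exist a weight function $\phi$ and constants $C_0, \tau_0$ such that for all $\tau \ge \tau_0$ and all $w \in C_0^\infty(B_{r_3}\setminus\{0\})^n$,
\begin{equation}
\tau^3 \int |x|^{-2\tau} \phi \, |w|^2 + \tau \int |x|^{-2\tau}\phi\,|\nabla w|^2 \le C_0 \int |x|^{-2\tau} |\triangle w|^2.
\end{equation}
One applies this to $w = \chi u$, where $\chi$ is a cutoff equal to $1$ on $B_{r_2}$, supported in $B_{r_3}$, and vanishing near the center on $B_{r_1}$; the commutator terms $[\triangle,\chi]u$ are supported in the two annular regions $B_{r_1}$ (inner) and $B_{r_3}\setminus B_{r_2}$ (outer). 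The right-hand side, after using the equation and absorbing the lower-order coefficient terms into the left-hand side for $\tau$ large (this is where \eqref{ablim} and the restriction $r_2 < \vartheta^* r_3$ enter, the latter ensuring the weight separates the scales enough to absorb the gradient-coefficient $A\cdot\nabla u$), is bounded by contributions from these two annuli. Estimating the weight $|x|^{-2\tau}$ from above and below on each region by its value at the relevant radius, one gets an inequality of the shape $r_2^{-2\tau}\int_{B_{r_2}}|u|^2 \le C\,r_1^{-2\tau}\int_{B_{r_1}}|u|^2 + C\,r_3^{-2\tau}\int_{B_{r_3}}(|u|^2 + |\nabla u|^2)$; a Caccioppoli inequality on the outer annulus removes the gradient term at the price of enlarging $r_3$ slightly, which is harmless after relabeling radii.

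Finally I would optimize in $\tau$. Writing $a = \int_{B_{r_1}}|u|^2$, $b = \int_{B_{r_2}}|u|^2$, $c = \int_{B_{r_3}}|u|^2$, the previous line reads $b \le C(r_1/r_2)^{2\tau} a + C(r_3/r_2)^{-2\tau}c$ for all $\tau \ge \tau_0$; choosing $\tau$ to balance the two terms (which is legitimate provided the resulting value exceeds $\tau_0$, and otherwise the inequality is trivial after adjusting $C$) produces $b \le C a^\delta c^{1-\delta}$ with $\delta = \log(r_3/r_2)/\log(r_3 r_2/r_1^2) \in (0,1)$ depending only on the ratios $r_1/r_3$, $r_2/r_3$, as claimed. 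The main obstacle I anticipate is the pressure elimination done quantitatively: $p$ is only an $L^2$ function a priori, so one must be careful to keep all estimates on $p$ (and the harmonic part arising from the divergence equation) in norms that feed correctly into the Carleman inequality, and to ensure the cutoff/commutator manipulations remain valid at this regularity — this is precisely the technical heart of \cite{ULW}, which I would cite for the detailed justification rather than reprove here.
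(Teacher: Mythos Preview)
The paper does not give a self-contained proof of Theorem~\ref{teotresfere}; it is stated as a simplified version of the main result of Lin--Uhlmann--Wang~\cite{ULW} and simply quoted. Your proposal is therefore not in conflict with the paper's proof---there is none to compare with---and the outline you give (Carleman weight $|x|^{-2\tau}$, cutoff, commutator terms living on two annuli, optimization in $\tau$) is indeed the skeleton of the argument in~\cite{ULW}, so citing that reference for the heavy lifting, as you do, matches exactly what the paper does.

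One point of your sketch is looser than it should be. You write that by taking the divergence one obtains $\triangle p$ in terms of $u$, ``so that $p$ is determined (up to a harmonic function, which is controlled)'', and then ``substituting back'' reduces everything to a scalar Carleman inequality for $\triangle w$. This is not how the argument actually runs, and the parenthetical ``which is controlled'' hides the real issue: $p$ is nonlocal in $u$, and there is no pointwise substitution that turns the system into $\triangle u = \mathcal{R}(\nabla u, u)$ with $\mathcal{R}$ a local first-order operator. What~\cite{ULW} (following~\cite{FL},~\cite{Regb}) do instead is either apply $\triangle$ once more to obtain a genuine fourth-order equation in $u$ alone (since $\triangle p$ \emph{is} local in $u$), or prove a coupled Carleman estimate for the pair $(u,q)$ with $q=\triangle p$; in both variants the Carleman inequality is for a higher-order or coupled operator, not for $\triangle$ acting componentwise. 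Since you explicitly defer the ``technical heart'' to~\cite{ULW}, this does not invalidate your proposal, but the phrase ``reduce to a scalar elliptic inequality'' misrepresents the mechanism and would mislead a reader trying to reconstruct the argument.
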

We will also need to formulate a three spheres type inequality for the first derivatives of $u$:
\begin{corollary} \label{teotresferegrad}
Let $u \in \accauno{B_R}$ be a solution to \eqref{ellgeneral}, and suppose that \eqref{ablim} holds. Assume furthermore that $B(x)\equiv 0$. Then we have that for all $0<r_1 <r_2 <\vartheta^* r_3$ with $r_3  \le R$:
\begin{equation} \label{tresferegrad}  \int_{B_{r_2}} \! |\nabla u|^2 \le C \Big(\int_{B_{r_1} } \! |\nabla u|^2 \Big)^\delta  \Big(\int_{B_{r_3}} \! |\nabla u|^2 \Big)^{1-\delta} \end{equation} 
where $\vartheta^*$ is the same as in Theorem \ref{teotresfere}, $\delta \in (0,1)$ and $C>0$ are constants depending only on $\en$, $n$, $\frac{r_1}{r_3}$ and $\frac{r_2}{r_3}$, and the balls $B_{r_i}$ are concentric with $B_R$.
\end{corollary}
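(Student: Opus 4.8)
The plan is to derive the gradient three spheres inequality (Corollary \ref{teotresferegrad}) from the function-valued three spheres inequality (Theorem \ref{teotresfere}) by differentiating the equation and applying interior Caccioppoli-type estimates. The key observation is that when $B(x)\equiv 0$, each first derivative $v=\partial_k u$ of a solution $(u,p)$ to \eqref{ellgeneral} again solves a Stokes-type system of the same form. Indeed, differentiating \eqref{ellgencomp} with respect to $x_k$ gives
\begin{displaymath}
\triangle (\partial_k u_i) + a_j \frac{\partial (\partial_k u_i)}{\partial x_j} + (\partial_k a_j)\frac{\partial u_i}{\partial x_j} + \frac{\partial (\partial_k p)}{\partial x_i} = 0,
\end{displaymath}
together with $\dive(\partial_k u)=0$. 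The troublesome term $(\partial_k a_j)\partial_j u_i$ has no factor of the unknown $v$; however, since $A\in C^{1,\alpha}(B_R)$ by \eqref{ablim}, we have $\partial_k a_j \in C^{0,\alpha}$, so this term can either be absorbed into a zeroth-order coefficient matrix acting on a companion vector or, more cleanly, treated by enlarging the system: one considers the vector $U=(u,\nabla u)$, which solves a Stokes-type system whose coefficients are again bounded by a constant depending on $\en$ (using the $C^{1,\alpha}$ bound on $A$). One must check that the structural form \eqref{ellgeneral} is preserved; this is where the hypothesis $B\equiv 0$ is used, since otherwise differentiating $b_{ij}u_j$ would force a $C^{1,\alpha}$ bound on $B$, which \eqref{ablim} does not provide.

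Once $\nabla u$ (or the enlarged vector $U$) is recognized as a solution of a system to which Theorem \ref{teotresfere} applies, one would like to simply quote \eqref{tresfere} for $U$. The remaining issue is that the system satisfied by $U$ involves the pressure gradient $\nabla p$ and its derivatives, and one must verify that this fits the template \eqref{ellgeneral} with an appropriate (new) pressure unknown — the natural choice being $P=(\nabla p)$ componentwise, after which the divergence constraints $\dive(\partial_k u)=0$ hold. So the steps are: (1) differentiate the PDE and the divergence condition; (2) assemble the enlarged first-order system for $U=(u,\nabla u)$, checking it has the form \eqref{ellgeneral} with coefficients controlled by $\en$ and zeroth-order term still absent in the relevant block (or harmlessly present, since Theorem \ref{teotresfere} allows $B\neq 0$); (3) apply Theorem \ref{teotresfere} to $U$ on the concentric balls $B_{r_1}\subset B_{r_2}\subset B_{r_3}$; (4) bound $\int_{B_{r_2}}|\nabla u|^2$ from above by $\int_{B_{r_2}}|U|^2$ and, on the right, bound $\int_{B_{r_i}}|U|^2$ in terms of $\int_{B_{r_i'}}|\nabla u|^2$ on slightly larger balls using a Caccioppoli inequality for Stokes systems to control the $\int|u|^2$ part by $\int|\nabla u|^2$ (after subtracting the mean of $u$, which is legitimate since constants solve the homogeneous system when $B=0$); (5) readjust the radii $r_i$ by fixed factors, which only changes the constants $C$ and $\delta$ in the allowed way.

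The main obstacle I expect is step (4): passing from an $L^2$ estimate on $U=(u,\nabla u)$ back to a clean $L^2$ estimate on $\nabla u$ alone. The upper bound is immediate, but on the lower-order (right-hand) side the quantity $\int_{B_{r_1}}|U|^2 \ge \int_{B_{r_1}}|\nabla u|^2$ goes the wrong way — we need $\int_{B_{r_1}}|U|^2 \lesssim \int_{B_{r_1+}}|\nabla u|^2$, i.e. we must control the full $L^2$ norm of $u$ by the $L^2$ norm of its gradient on a comparable ball. This is exactly a Poincaré/Caccioppoli argument, but it requires subtracting an additive constant $c$ from $u$; since $B\equiv 0$ the pair $(u-c,p)$ still solves \eqref{ellgeneral}, and applying the three spheres inequality to $u-c$ and optimizing over $c$ yields the gradient estimate. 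Handling this constant-shift carefully, and making sure all the balls involved stay nested with the ratios $r_i/r_3$ only changed by dimensional factors so that $\delta$ and $C$ retain the stated dependence on $\en,n,r_1/r_3,r_2/r_3$, is the delicate bookkeeping part of the proof; everything else is a routine consequence of Theorem \ref{teotresfere} and standard interior regularity for the Stokes system.
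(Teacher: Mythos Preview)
Your differentiation route has a structural gap: when you assemble the enlarged vector $U=(u,\partial_1 u,\dots,\partial_n u)$, the resulting system carries $n+1$ distinct scalar ``pressures'' $p,\partial_1 p,\dots,\partial_n p$, one in each block. Theorem \ref{teotresfere} (and the Lin--Uhlmann--Wang result it quotes) is stated for a Stokes system with a \emph{single} scalar pressure whose gradient enters all components; a block-diagonal multi-pressure system does not fit the template \eqref{ellgeneral}, and you give no argument that the three spheres inequality survives this generalization. Moreover the cross-coupling term $(\partial_k a_j)\partial_j u_i$ mixes the blocks, so you cannot simply apply Theorem \ref{teotresfere} to each $\partial_k u$ separately either.

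The good news is that the detour is unnecessary: the ``obstacle workaround'' you describe at the very end is, by itself, the entire proof --- and it is exactly what the paper does. Since $B\equiv 0$, the pair $(u-u_E,p)$ with $u_E$ the mean of $u$ over $B_{r_3}$ still solves \eqref{ellgeneral}. Now sandwich the three spheres inequality \eqref{tresfere} for $u-u_E$ between Caccioppoli on the left and Poincar\'e--Wirtinger on the right:
\begin{displaymath}
\int_{B_{r_2}}|\nabla u|^2 \;\lesssim\; \int_{B_{r_2'}}|u-u_E|^2 \;\lesssim\; \Big(\int_{B_{r_1}}|u-u_E|^2\Big)^{\delta}\Big(\int_{B_{r_3}}|u-u_E|^2\Big)^{1-\delta} \;\lesssim\; \Big(\int_{B_{r_1}}|\nabla u|^2\Big)^{\delta}\Big(\int_{B_{r_3}}|\nabla u|^2\Big)^{1-\delta},
\end{displaymath}
with an intermediate radius $r_2<r_2'<\vartheta^* r_3$ absorbed into the constants. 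No differentiation of the system, no enlarged vector, no new pressure bookkeeping. Drop steps (1)--(3) of your plan and promote your final paragraph to the whole argument.
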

\begin{remark}
We point out that the first equation \eqref{NSE} may be written in the form \eqref{ellgencomp} by writing the nonlinear term as in \eqref{nonlin} and calling $(A(x))_{j}= u_j$ and $(B(x))_{ij}=0$. Therefore, the three spheres inequalities \eqref{tresfere} and \eqref{tresferegrad} may be applied to solutions of \eqref{NSE} in a domain $E$ as long as \eqref{ablim} holds in $E$ with the aforementioned choices of $A$ and $B$. 
\end{remark}
Finally, we would like to recall the following propositions. The first is a  version of Poincar\`e inequality dealing with functions that vanish on an open portion of the boundary (see \cite{Meyers}, or \cite{AleMorRos} for a precise evaluation of the constants in terms of the Poincar\'e constant of the domain and the measure of the portion of the boundary of the domain where the function vanishes on). The second is a Caccioppoli-type inequality for \eqref{ellgeneral}, which can be found in \cite{GiaMod}.
\begin{proposition}[Poincar\`e inequality]
Let $E\subset \mathbb{R}^n$ be a bounded domain with boundary of Lipschitz class with constants $\rho_0$, $M_0$ and satisfying \eqref{apriori2}. Then for every $ u \in  \accan{1}{E}$ such that
\begin{displaymath} 
u = 0  \, \, \text{on}  \, \, \partial E \cap B_{\rho_0}(P), 
\end{displaymath}
where $P$ is some point in $\partial E$, we have
\begin{equation} \label{pancarre}
\normadue{u}{E} \le C \rho_0 \normadue{\nabla u}{E},
\end{equation}
where C is a positive constant only depending on $M_0$ and $M_1$.
\end{proposition}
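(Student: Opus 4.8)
The plan is to reduce the statement to an ordinary Poincar\'e inequality with mean value by extending $u$ by zero across the portion of $\partial E$ where it vanishes, and then to control the mean of the extension by exploiting that it vanishes on a set of definite measure. By the scaling $x\mapsto x/\rho_0$ (equivalently, since the normalized norms coincide with the usual ones when $\rho_0=1$) it suffices to treat the case $\rho_0=1$, where the claim reads $\normadue{u}{E}\le C\normadue{\nabla u}{E}$ with $C=C(n,M_0,M_1)$.

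First I fix coordinates so that $P=0$ and $E\cap B_1(0)=\{(x',x_n)\in B_1(0):x_n>\varphi(x')\}$, with $\varphi$ an $M_0$-Lipschitz function, $\varphi(0)=0$. Set $Z:=B_1(0)\setminus\overline E=\{x_n<\varphi(x')\}\cap B_1(0)$. If $x_n<-M_0|x'|$, then since $\varphi(x')\ge\varphi(0)-M_0|x'|=-M_0|x'|>x_n$ the point lies below the graph, so the cone-cap $\{x_n<-M_0|x'|\}\cap B_1(0)$ is contained in $Z$; being a nonempty open cone-cap, it has volume $\ge c_0$ for some $c_0=c_0(n,M_0)>0$, hence $|Z|\ge c_0$. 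Now define $\widetilde E:=E\cup B_1(0)$ and $\widetilde u:=u$ on $E$, $\widetilde u:=0$ on $Z$. Because $u$ has vanishing trace on the Lipschitz piece $\partial E\cap B_1(0)$, the zero extension $\widetilde u$ belongs to $H^1(\widetilde E)$ with $\nabla\widetilde u=\chi_E\nabla u$ a.e.; in particular $\normadue{\widetilde u}{\widetilde E}=\normadue{u}{E}$ and $\normadue{\nabla\widetilde u}{\widetilde E}=\normadue{\nabla u}{E}$. Moreover $\widetilde E$ is bounded, connected (as $E$ is connected and meets $B_1(0)$), and $|\widetilde E|\le|E|+|B_1|\le M_1+\omega_n$ where $\omega_n=|B_1|$.

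Next I invoke the Poincar\'e inequality with mean value on $\widetilde E$: $\|v-\bar v\|_{L^2(\widetilde E)}\le C_P\|\nabla v\|_{L^2(\widetilde E)}$ for all $v\in H^1(\widetilde E)$, with $\bar v$ the average of $v$ over $\widetilde E$ and $C_P=C_P(n,M_0,M_1)$. (One has to check that $\widetilde E$ is Lipschitz with constants controlled by $n,M_0,M_1$; the only new boundary arcs are portions of $\partial B_1(0)$, and one arranges them to meet $\partial E$ transversally, or replaces $B_1(0)$ by a slightly smaller smooth strictly convex set, to ensure this — alternatively the compactness argument below makes this point unnecessary.) Applying it to $v=\widetilde u$ and using $\widetilde u\equiv0$ on $Z$, $\ \bar{\widetilde u}\,|Z|=\big|\int_Z(\bar{\widetilde u}-\widetilde u)\big|\le|Z|^{1/2}\,\|\widetilde u-\bar{\widetilde u}\|_{L^2(\widetilde E)}$, whence $|\bar{\widetilde u}|\le c_0^{-1/2}C_P\,\normadue{\nabla u}{E}$. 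Therefore $\normadue{u}{E}=\normadue{\widetilde u}{\widetilde E}\le\|\widetilde u-\bar{\widetilde u}\|_{L^2(\widetilde E)}+|\widetilde E|^{1/2}|\bar{\widetilde u}|\le\big(1+(M_1+\omega_n)^{1/2}c_0^{-1/2}\big)C_P\,\normadue{\nabla u}{E}$, which is the claim with $C=C(n,M_0,M_1)$; reinstating $\rho_0$ by the scaling above gives $\normadue{u}{E}\le C\rho_0\normadue{\nabla u}{E}$.

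The main obstacle is the \emph{uniform} Poincar\'e constant $C_P$, i.e. a bound that is independent of the particular domain in the admissible class. I would obtain it by a contradiction/compactness argument: were it false, there would exist admissible domains $E_k$ (scaled to $\rho_0=1$) and $v_k\in H^1(E_k)$ with $\bar v_k=0$, $\|v_k\|_{L^2(E_k)}=1$ and $\|\nabla v_k\|_{L^2(E_k)}\to0$; extending the $v_k$ to a fixed large ball with uniformly bounded $H^1$ norm (using the extension operators available for Lipschitz domains with fixed constants), applying Rellich's theorem and the Hausdorff precompactness of the class of domains, one passes to a nonzero constant function on a connected limit domain which must still have zero average — a contradiction. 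The verifications that the trace and average conditions pass to the limit and that the limit domain is again admissible are routine but somewhat delicate; for the fully quantitative dependence of $C_P$ on $n,M_0,M_1$ one may instead appeal to \cite{Meyers}, \cite{AleMorRos}.
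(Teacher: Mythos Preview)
The paper does not prove this proposition; it simply records it and refers to \cite{Meyers} and \cite{AleMorRos}. There is thus no proof in the paper to compare yours against, and your write-up should be read as a self-contained argument.

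Your strategy is the standard one and is essentially correct: extend $u$ by zero across $\partial E\cap B_{\rho_0}(P)$ into the exterior region $Z$, apply a Poincar\'e--Wirtinger inequality on the enlarged domain $\widetilde E$, and recover control of the mean from the fact that $\widetilde u\equiv 0$ on $Z$, whose measure you bound below by the cone-cap estimate $|Z|\ge c_0(n,M_0)$. The zero extension does lie in $H^1(\widetilde E)$ because the trace vanishes on the Lipschitz portion, and the algebra leading from $C_P$ and $c_0$ to the final constant is clean.

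The only genuine gap---which you correctly flag---is the \emph{uniform} Poincar\'e--Wirtinger constant on $\widetilde E$. As defined, $\widetilde E=E\cup B_1(P)$ is not in general Lipschitz with constants depending only on $M_0$: along $\partial B_1(P)\cap\partial E$ the sphere may be tangent to $\partial E$, producing outward cusps, and neither ``arranging transversality'' nor replacing $B_1$ by a smaller convex set yields a uniform lower bound on the contact angle. This is easily repaired by a different choice of extension region---for instance the subgraph cylinder $\{|x'|<r,\ -h<x_n<\varphi(x')\}$ with $r,h$ fixed in terms of $M_0$, whose new boundary pieces are flat and meet $\partial E$ at a definite angle---after which $\widetilde E$ is uniformly Lipschitz with $|\widetilde E|\le M_1+C(n,M_0)$. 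The uniform Poincar\'e--Wirtinger constant over this class then follows either from the compactness argument you sketch (uniform Stein/Chenais extension, Rellich, and Hausdorff precompactness of the domain class) or directly from \cite{Meyers} and, with explicit constants, \cite{AleMorRos}. In the end your argument is a correct reduction to precisely the references the paper already invokes for the full statement.
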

\begin{proposition}[Caccioppoli inequality]
Let $u \in \accauno{B_R}$ be a solution of \eqref{ellgeneral} in $B_{R}$, and suppose that \eqref{ablim} holds. Then there exists $C>0$ depending only on $n$,$\mu$ and $\mathcal{E}_1$ such that, for every $r$ with $0<r<R$ we have 
\begin{equation} \label{cacio} 
\int_{B_r} |\nabla u|^2 \le \frac{C}{(R-r)^2} \int_{B_R} |u|^2.
\end{equation} 
\end{proposition}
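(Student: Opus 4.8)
The plan is to run the classical energy (Caccioppoli) argument — test the equation against a localized multiple of $u$ and absorb — with two modifications forced by the fluid structure: a divergence corrector that eliminates the pressure term, and an iteration (hole-filling) lemma that absorbs the spurious gradient term the corrector produces.

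First I would fix radii $r\le t<s\le R$ and a cut-off $\eta\in C_c^\infty(B_s)$ with $\eta\equiv 1$ on $B_t$, $0\le\eta\le 1$ and $|\nabla\eta|\le \frac{C}{s-t}$. Since $\dive u=0$, the function $f:=\dive(\eta^2 u)=2\eta\,\nabla\eta\cdot u$ has zero mean on $B_s$, so by the classical solvability of the divergence equation on a ball (e.g.\ \cite{Galdi}) there is $v\in\accano{1}{B_s}$ with $\dive v=f$ and $\|\nabla v\|_{\elledue{B_s}}\le C(n)\|f\|_{\elledue{B_s}}\le \frac{C(n)}{s-t}\|u\|_{\elledue{B_s}}$ (the relevant operator norm is scale invariant on balls, hence depends on $n$ only). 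Then $\varphi=\eta^2 u-v\in\accano{1}{B_s}$ is divergence free, so testing \eqref{ellgeneral} against $\varphi$ makes the pressure term $\int\nabla p\cdot\varphi=-\int p\,\dive\varphi$ vanish and gives
\begin{displaymath}
\int\nabla u:\nabla\varphi=\int (A\cdot\nabla u)\cdot\varphi+\int(Bu)\cdot\varphi .
\end{displaymath}
(Equivalently one may keep $\eta^2 u$ as test function and control $\|p-\bar p\|_{\elledue{B_s}}$ by the Ne\v{c}as negative-norm inequality applied to $\nabla p=-\triangle u-A\cdot\nabla u-Bu$; the two routes yield the same estimate.)

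Next I would insert $\nabla\varphi=\eta^2\nabla u+2\eta\,\nabla\eta\otimes u-\nabla v$ to obtain
\begin{displaymath}
\int\eta^2|\nabla u|^2=-2\int\eta\,(\nabla\eta\otimes u):\nabla u+\int\nabla u:\nabla v+\int(A\cdot\nabla u)\cdot\varphi+\int(Bu)\cdot\varphi ,
\end{displaymath}
and estimate each term on the right by Cauchy--Schwarz and Young's inequality with a small parameter $\epsilon$, using $|A|,|B|\le\en$ pointwise (a consequence of \eqref{ablim}), $|\nabla\eta|\le C/(s-t)$, the bound on $\|\nabla v\|_{\elledue{B_s}}$, and the Poincar\'e inequality on $B_s$ to pass from $\|v\|_{\elledue{B_s}}$ to $\|\nabla v\|_{\elledue{B_s}}$. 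Every gradient contribution is then either of the form $\int\eta^2|\nabla u|^2$ (harmless, on the left) or, from the corrector terms, of the form $\epsilon\int_{B_s}|\nabla u|^2$, while the remaining terms are bounded by $\frac{C}{(s-t)^2}\int_{B_s}|u|^2$. Choosing $\epsilon$ so that the coefficient of $\int_{B_s}|\nabla u|^2$ becomes a fixed $\theta<1$, and using $\int_{B_t}|\nabla u|^2\le\int\eta^2|\nabla u|^2$, I reach
\begin{displaymath}
\int_{B_t}|\nabla u|^2\le\theta\int_{B_s}|\nabla u|^2+\frac{C}{(s-t)^2}\int_{B_R}|u|^2\qquad\text{for all }r\le t<s\le R,
\end{displaymath}
with $\theta\in(0,1)$ and $C$ depending only on $n$ and $\en$.

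Finally I would apply the standard iteration lemma (see e.g.\ \cite{GiaMod}): if $\phi:[r,R]\to[0,\infty)$ is bounded and $\phi(t)\le\theta\phi(s)+\frac{A}{(s-t)^2}$ for all $r\le t<s\le R$ with $\theta<1$, then $\phi(r)\le C(\theta)\,\frac{A}{(R-r)^2}$. With $\phi(t)=\int_{B_t}|\nabla u|^2$ and $A=C\int_{B_R}|u|^2$ this yields \eqref{cacio}. The step I expect to be the main obstacle is precisely the treatment of the pressure: eliminating $\nabla p$ via the divergence corrector (with a dimensionally controlled constant) unavoidably creates a gradient term on the \emph{larger} ball $B_s$, which cannot be absorbed into the left-hand side in one shot; this is what forces the passage through the iteration lemma rather than a one-step Caccioppoli estimate. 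The lower-order terms generated by $A$ and $B$, once $|A|,|B|\le\en$ is used, are then routine.
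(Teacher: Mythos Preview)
Your argument is correct and is precisely the standard route: localize with a cut-off, kill the pressure by subtracting a Bogovski\u{\i}-type corrector so the test function is divergence free, and then run the hole-filling iteration to absorb the stray $\int_{B_s}|\nabla u|^2$ generated by the corrector. The paper does not supply its own proof of this proposition; it simply quotes it from \cite{GiaMod}, and your outline is essentially the Giaquinta--Modica argument specialized to \eqref{ellgeneral} with bounded lower-order coefficients.
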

These two facts can be used to prove Corollary \ref{teotresferegrad}:
\begin{proof}[Proof of Corollary \ref{teotresferegrad}.]
We  notice that if $u$ is a solution to \eqref{ellgeneral} and $B(x)=0$  , then $u-u_E$ is a solution as well, where $u_E$ denotes the average of $u$ in $E$. Since the classical Poincar\'e-Wirtinger inequality (see \cite{Meyers}) holds for $u-u_E$, we apply it together with the Caccioppoli inequality and the three spheres inequalities \eqref{tresfere}, \eqref{tresferegrad} to obtain: 
\begin{displaymath} \begin{split}
& \int_{B_{r_2}} \! |\nabla u|^2\le  \frac{C_1}{\rho_0^2(r_3-r_2)^2}\int_{B_{r_2}} \! | u-u_E|^2 \le \\ \le &\frac{C_2}{\rho_0^2(r_3-r_2)^2}  \Big(\int_{B_{r_1}} \! | u- u_E|^2 \Big)^\delta \Big(\int_{B_{r_3}} \! |u- u_E|^2 \Big)^{1-\delta} \le \\ \le &\frac{C_3}{(r_3-r_2)^2}  \Big(\int_{B_{r_1}} \! |\nabla u|^2 \Big)^\delta \Big(\int_{B_{r_3}} \! |\nabla u|^2 \Big)^{1-\delta},
\end{split} \end{displaymath} 
and $C_1$, $C_2$, $C_3$ only depend on $\en$, $n$,  $\frac{r_1}{r_3}$ and $\frac{r_2}{r_3}$.
\end{proof}
We can now show that Proposition \ref{teoPOSC} follows from Proposition \ref{teoPOS}:
\begin{proof}[Proof of Proposition \ref{teoPOSC}.]
From Proposition \ref{teoPOS} we know that 
\begin{displaymath}  \int_{B_{\rho}(x)} \! |\nabla u_i|^2  \ge C_\rho  \int_{\Omega \setminus \overline{D_i}} \! |\nabla u_i|^2 , \end{displaymath}
where $C_\rho$ is given in \eqref{crho}.
We have, using Poincar\`e inequality \eqref{pancarre} and the trace theorem, 
\begin{equation} \label{altofrequenza} \begin{split}  \int_{\Omega\setminus \overline{D_i}}  |\nabla u_i|^2  \ge C \rho_0^{n-2} \norma{u_i}{1}{\Omega \setminus \overline{D_i}}^2  \ge  C \rho_0^{n-2} \norma{g}{\frac{1}{2}}{\partial \Omega}^2. \end{split}\end{equation}
Applying the above estimate to \eqref{POS} and using \eqref{equivalence} will prove our statement.
\end{proof}
Finally, we will need a three spheres inequality for functions that can be written as differences of solutions of the Navier-Stokes equations (due to nonlinearity, this does not follow the previous remarks).
\begin{proposition} \label{tredifferenze} 
Let $u_1$ and $u_2$ be solutions of  
\begin{equation} \label{NSEi}
\left\{ \begin{array}{rl}
 \dive \sigma (u_i,p_i) &= (u_i \cdot \nabla) u_i \hspace{2em} \mathrm{\tmop{in}} \hspace{1em} E,\\ \dive u_i &= 0 \hspace{5.5em} \mathrm{\tmop{in}} \hspace{1em} E,
\end{array} \right.
\end{equation} 
for $i=1,2$. Suppose that $\|u_1 \|_{\mathbf{C}^{1,\alpha}(E)} +\|u_2 \|_{\mathbf{C}^{0,\alpha}(E)} \le \en $. Let $B_R(x) \subset E$. Then there exists a real number $\vartheta^* \in (0, e^{-1/2})$, depending only on $n$, such that, for all $0<r_1 <r_2 <\vartheta^* r_3$ with $r_3 \le R$ we have, calling $w=u_1-u_2$:
\begin{equation} \label{tresferediff}  \int_{B_{r_2}} \! | w|^2 \le C \Big(\int_{B_{r_1} } \! | w|^2  \Big)^\delta  \Big(\int_{B_{r_3}} \! | w|^2  \Big)^{1-\delta} \end{equation} 
where the balls $B_{r_i}$ are centered in $x$, and $\delta \in (0,1)$ and $C>0$ are constants depending only on $\en$, $n$, $\frac{r_1}{r_3}$ and $\frac{r_2}{r_3}$.
\end{proposition}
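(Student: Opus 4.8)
The plan is to reduce Proposition \ref{tredifferenze} to the \emph{linear} three spheres inequality of Theorem \ref{teotresfere}, by exhibiting $w=u_1-u_2$ as a solution of a Stokes system of the form \eqref{ellgeneral}--\eqref{ellgencomp} whose coefficients are controlled by the a priori bounds on $u_1$ and $u_2$. First I would subtract the two momentum equations in \eqref{NSEi}. Writing $q=p_1-p_2$ and using that $\dive\sigma(u,p)=\mu\Delta u-\nabla p$ whenever $\dive u=0$, the difference reads $\mu\Delta w-\nabla q=(u_1\cdot\nabla)u_1-(u_2\cdot\nabla)u_2$. The key algebraic identity is $(u_1\cdot\nabla)u_1-(u_2\cdot\nabla)u_2=(u_1\cdot\nabla)w+(w\cdot\nabla)u_2$, which is linear in $w$. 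Dividing by $\mu$ and rearranging, $w$ together with the pressure $q/\mu$ solves a system exactly of type \eqref{ellgeneral}; componentwise it is \eqref{ellgencomp} with $a_j=-\mu^{-1}(u_1)_j$ and $b_{ij}=-\mu^{-1}\partial_j(u_2)_i$, while $\dive w=0$ is inherited from $\dive u_i=0$. The pressure difference is harmlessly absorbed into the $\nabla p$ term already present in \eqref{ellgeneral}, so it never needs to be estimated.

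Next I would check hypothesis \eqref{ablim} on an arbitrary ball $B_R(x)\subset E$. Since $A$ has components $-\mu^{-1}(u_1)_j$ one has $\|A\|_{\mathbf{C}^{1,\alpha}(B_R(x))}=\mu^{-1}\|u_1\|_{\mathbf{C}^{1,\alpha}(B_R(x))}$, and since $B$ has entries $-\mu^{-1}\partial_j(u_2)_i$ one has $\|B\|_{\mathbf{C}^{0,\alpha}(B_R(x))}\le\mu^{-1}\|u_2\|_{\mathbf{C}^{1,\alpha}(B_R(x))}$; both are bounded by a constant depending only on $\en$ and $\mu$ thanks to the a priori regularity of the solutions (Theorem \ref{teoschauder}), so \eqref{ablim} holds with $\en$ replaced by such a constant. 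With this in hand, applying Theorem \ref{teotresfere} to $w$ on $B_R(x)$ gives \eqref{tresferediff} directly, with $\vartheta^*$ depending only on $n$ and with $\delta\in(0,1)$, $C>0$ depending only on $n$, $\en$, $\mu$ and the ratios $r_1/r_3$, $r_2/r_3$, as claimed.

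The only genuine difficulty -- and the reason the nonlinear problem forces the extra a priori assumptions emphasized in the introduction -- lies in controlling the zeroth order coefficient $B$: the nonlinear difference unavoidably produces the term $(w\cdot\nabla)u_2$, so $B$ involves the first derivatives of $u_2$, and one therefore needs H\"older control of $\nabla u_2$, i.e. the $\mathbf{C}^{1,\alpha}$ bound on the solutions supplied by \eqref{aprioriE} and Theorem \ref{teoschauder} (a mere $\mathbf{C}^{0,\alpha}$ bound on $u_2$ would not be enough to meet \eqref{ablim}; in the application both $u_1$ and $u_2$ solve \eqref{NSE}, hence both lie in $\mathbf{C}^{1,\alpha}$ with norm $\le C\en$ by Theorem \ref{teoschauder}, which is what is actually used). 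Once the reduction to \eqref{ellgeneral} and this coefficient bound are secured, the statement is a black-box consequence of the linear three spheres inequality, with no further work required; in particular, unlike in Corollary \ref{teotresferegrad}, no Caccioppoli or Poincar\'e step is needed here since we only assert the inequality for $\int|w|^2$.
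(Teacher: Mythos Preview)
Your proposal is correct and follows essentially the same approach as the paper: reduce to the linear three spheres inequality of Theorem \ref{teotresfere} by writing $w=u_1-u_2$ as a solution of a system of type \eqref{ellgeneral} via an algebraic decomposition of the nonlinear difference. The only cosmetic difference is that the paper uses the symmetric splitting $(u_1\cdot\nabla)u_1-(u_2\cdot\nabla)u_2=(u_2\cdot\nabla)w+(w\cdot\nabla)u_1$, so that $A\sim -u_2$ and $B\sim\nabla u_1$ (the roles of $u_1$ and $u_2$ swapped relative to your choice); your remark that in the actual application both $u_i$ are $\mathbf{C}^{1,\alpha}$ by Theorem \ref{teoschauder}, so either splitting meets \eqref{ablim}, is exactly right.
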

\begin{proof}
In view of Theorem \eqref{teotresfere} (and the subsequent remarks), it is enough to show that $w$ can be written as a solution to a system of the form  \eqref{ellgeneral}. This is readily done, by subtracting from each other \eqref{NSEi} for $i=1,2$. We may write, in components for $j=1,\dots,n$:
\begin{displaymath}
\mu \triangle w_j  - (u_2)_i \frac{\partial w_j}{\partial x_i} +   \frac{\partial{(u_1)_j}}{\partial x_i} w_i + \frac{\partial (p_1-p_2)}{\partial x_j}=0.
\end{displaymath}  
Calling $(A(x))_{i}= (-u_2)_i $ and $(B(x))_{ij}= \frac{\partial (u_1)_j}{\partial x_i} $, we have that \eqref{ablim} holds in $E$, so the hypotheses of Theorem \ref{teotresfere} hold for $w$. 
\end{proof}
\begin{remark}
We observe that, since the identically zero function solves \eqref{NSEi} in $E$, we can also apply the three spheres inequality to each $u_i$ separately (as we already pointed out in  the previous remark). 
\end{remark}
The proof of Proposition \ref{teoPOS} is now a consequence of the work done so far. 
\begin{proof}[Proof of Proposition \ref{teoPOS}.]
The proof is based upon the validity of the three spheres inequality for \ref{NSE}. Since we have just proved Proposition \ref{tredifferenze}, we refer to \cite{MEE}, \cite{MR} and \cite{MRC} for a detailed proof, which applies here with only slight modifications, as it only requires \eqref{tresfere} and \eqref{tresferegrad} and some geometric constructions which only exploit the regularity of $\partial E$ and thus apply unchanged to this situation. 
\end{proof}

\section{Stability of continuation from Cauchy data.}
We will need the following lemma, proved in \cite{ABRV}:
\begin{lemma}[Regularized domains] \label{regularized} 
Let $\Omega$ be a domain satisfying \eqref{apriori1} and \eqref{apriori2}, and let $D_i$, for $i=1,2$, be two connected open subsets of $\Omega$ satisfying \eqref{apriori3}, \eqref{apriori4}. Then there exist a family of regularized domains $D_i^h \subset \Omega$, for  $0 < h < a \rho_0$, with $C^1$ boundary of constants $\til{\rho_0}$, $\til{M_0}$  and such that 
\begin{equation} \label{643} D_i \subset D_i^{h_1} \subset D_i^{h_2} \; \text{ if  } 0<h_1 \le h_2; \end{equation}
\begin{equation} \label{644} \gamma_0 h \le \mathrm{dist}(x, \partial D_i) \le \gamma_1 h  \; \text{ for all   } x \in  \partial D_i^h; \end{equation}
\begin{equation} \label{645} \mathrm{meas}(D_i^h\setminus D_i)\le \gamma_2 M_1 \rho_0^2 h; \end{equation}
\begin{equation} \label{646} \mathrm{meas}_{n-1}(\partial D_i^h)\le \gamma_3 M_1 \rho_0^2; \end{equation}
and for every $x \in \partial D_i^h$ there exists $y \in \partial D_i$ such that 
\begin{equation} \label{647} |y-x|= \mathrm{dist}(x, \partial D_i), \; \; |\nu(x) - \nu(y)|\le \gamma_4 \frac{h^\alpha}{\rho_0^\alpha}; \end{equation}
where by $\nu(x)$ we mean the outer unit normal to $\partial D_i^h$, $\nu(y)$ is the outer unit normal to $D_i$, and the constants $a$, $\gamma_j$, $j=0 \dots 4$ and the ratios 
$\frac{\til{M}_0}{M_0}$, $\frac{\til{\rho}_0}{\rho_0}$ only depend on $M_0$ and $\alpha$.
\end{lemma}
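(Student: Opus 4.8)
The plan is to realize the regularized sets $D_i^h$ as sublevel sets of the signed distance function to $\partial D_i$. Fix $i$, write $D=D_i$, and let $b(x)=\mathrm{dist}(x,D)-\mathrm{dist}(x,\mathbb{R}^n\setminus D)$, so that $b<0$ in $D$, $b>0$ outside $\overline D$, $b=0$ on $\partial D$, and $|b|=\mathrm{dist}(\cdot,\partial D)$ in a neighborhood of $\partial D$. I would then set $D^h=\{x:b(x)<h\}$ for $0<h<a\rho_0$, with $a=a(M_0)\in(0,1)$ chosen as below. Several conclusions are then immediate: $D^h$ is open; $D=\{b<0\}\subset D^h$; the family is increasing in $h$, which is \eqref{643}; by \eqref{apriori4} one has $\overline{D^h}\subset\Omega$ once $a<1$; and $\partial D^h=\{b=h\}$ consists of points at distance exactly $h$ from $\partial D$, which is \eqref{644} with $\gamma_0=\gamma_1=1$.

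The first real task is the quantitative regularity of $b$. Using \eqref{apriori3}, i.e.\ that $\partial D$ is $C^{2,\alpha}$ with constants $\rho_0,M_0$, one straightens the boundary in local coordinates, writes the nearest-point projection $\pi$ via the implicit function theorem, and uses the identity $\nabla b(x)=\nu(\pi(x))$ (with $\nu$ the outer normal to $D$) to show that there is $a=a(M_0)\in(0,1)$ such that $b\in C^{2,\alpha}$ on the tube $\mathcal{N}=\{|b|<2a\rho_0\}$, with $|\nabla b|\equiv 1$ there and $\rho_0\|\nabla^2 b\|_{L^\infty(\mathcal{N})}+\rho_0^{1+\alpha}[\nabla^2 b]_{\alpha,\mathcal{N}}\le C(M_0,\alpha)$; this is the classical distance-function lemma (cf.\ Gilbarg--Trudinger, Lemma 14.16). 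Since $\{b=h\}\subset\mathcal{N}$ and $|\nabla b|=1$ there, a quantitative implicit function theorem — with constants drawn only from the displayed bound on $b$, hence independent of $h$ — represents $\partial D^h$ locally as a graph $x_n=\varphi_h(x')$ over a ball of radius $\tilde\rho_0=\rho_0/C(M_0,\alpha)$ with $\varphi_h(0)=0$, $\nabla\varphi_h(0)=0$ and $\|\varphi_h\|_{C^{1,\alpha}}\le\tilde M_0\tilde\rho_0$, the ratios $\tilde\rho_0/\rho_0$ and $\tilde M_0/M_0$ depending only on $M_0,\alpha$; this gives the asserted $C^1$ (indeed $C^{1,\alpha}$) regularity of $\partial D^h$ with uniform constants. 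For \eqref{647} I would note that for $x\in\partial D^h$ the nearest point $y=\pi(x)\in\partial D$ is unique, realizes $|y-x|=\mathrm{dist}(x,\partial D)$, and $\nu(x)=\nabla b(x)=\nabla b(y)=\nu(y)$ exactly, because $\nabla b$ is constant along the normal segment from $y$ to $x$ (the characteristics of $|\nabla b|=1$) — stronger than the stated bound; if one instead follows the smoothing construction of \cite{ABRV}, the $C^{1,\alpha}$ modulus of the mollified normal field produces precisely the error term $\gamma_4(h/\rho_0)^\alpha$.

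For the measure bounds I would use the coarea formula: $\mathrm{meas}(D^h\setminus D)=\mathrm{meas}\{0\le b<h\}=\int_0^h\mathrm{meas}_{n-1}\{b=t\}\,dt$, where each level $\{b=t\}$, being the image of $\partial D$ under the normal displacement $y\mapsto y+t\,\nu(y)$, has $(n-1)$-measure at most $(1+C(M_0)t/\rho_0)^{n-1}\mathrm{meas}_{n-1}(\partial D)\le C(M_0)\,\mathrm{meas}_{n-1}(\partial D)$ for all $t\in[0,2a\rho_0)$; taking $t=h$ gives \eqref{646} and integrating over $t\in(0,h)$ gives \eqref{645}, once $\mathrm{meas}_{n-1}(\partial D)\le C(n,M_0)M_1\rho_0^{n-1}$ is known. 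That last estimate I would get by covering $\partial D$ with coordinate patches of size $\sim\rho_0$ and controlled slope (each of area $\le C(M_0)\rho_0^{n-1}$) and bounding their number by $C(n,M_0)M_1$: a maximal disjoint subfamily of half-size balls each carries a volume $\ge c(M_0)\rho_0^n$ of $D\subset\Omega$, and $|\Omega|\le M_1\rho_0^n$ by \eqref{apriori2}.

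The main obstacle is the quantitative distance-function regularity invoked in the second paragraph: namely, that the width of the good tubular neighborhood $\mathcal{N}$ and the $C^{2,\alpha}$-bound on $b$ there are controlled by $M_0$ and $\alpha$ alone, and that the implicit-function-theorem estimates for the level sets $\{b=h\}$ are uniform as $h\downarrow 0$. Granting this, the remaining points — the nesting \eqref{643}, the distance identity \eqref{644}, the normal comparison \eqref{647}, and the coarea/covering estimates \eqref{645}--\eqref{646} — are routine bookkeeping of scales, which is why \cite{ABRV} can state the lemma without reproving it here.
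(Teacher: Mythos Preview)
The paper does not give its own proof of this lemma; it simply introduces it with the sentence ``We will need the following lemma, proved in \cite{ABRV}'' and then uses the conclusions as a black box in the proof of Proposition~\ref{teostabest}. So there is no argument here to compare against line by line.

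Your sketch is nonetheless a correct and standard way to obtain all of the stated conclusions. Taking $D^h=\{b<h\}$ for the signed distance $b$ to $\partial D$, the $C^{2,\alpha}$ regularity of $b$ in a tube of width $\sim\rho_0$ (Gilbarg--Trudinger, Lemma~14.16, with constants tracked through $M_0,\alpha$) gives uniform $C^{1,\alpha}$ graph bounds on the level sets $\{b=h\}$, the nesting \eqref{643} is trivial, and your coarea/covering argument for \eqref{645}--\eqref{646} is the right one. In fact your construction yields strictly sharper constants than the statement requires: you get $\gamma_0=\gamma_1=1$ in \eqref{644} and, since $\nabla b$ is constant along each normal ray, $\nu(x)=\nu(\pi(x))$ \emph{exactly} in \eqref{647}. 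The presence of distinct $\gamma_0,\gamma_1$ and of the $h^\alpha$ error on the normals in the stated lemma reflects the specific construction carried out in \cite{ABRV}, which, as you note, involves a mollification step rather than exact distance-function level sets; your direct approach simply bypasses that. The one place where real work is hidden is exactly the point you flag as the ``main obstacle'': making the tubular-neighborhood width and the $C^{2,\alpha}$ bound on $b$ quantitative in $M_0$ and $\alpha$ alone, uniformly as $h\downarrow 0$. Granting that, the rest is bookkeeping, which is why the present paper is content to cite \cite{ABRV}.
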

The proof the stability estimate of continuation from the Cauchy data heavily relies upon the upcoming result, which deals with the estimation of the stability of the stationary Navier-Stokes equations with homogeneous Cauchy data, the proof of which, in turn, is based upon an extension argument. We pospone the proof to the next section. Here we will set up the notations and state the theorem. Let us consider a bounded domain $E\subset \mathbb{R}^n$ satisfying hypotheses \eqref{apriori1} and \eqref{apriori2}, and take $\Gamma \subset \partial E$ a connected open portion of the boundary of  class $C^{2, \alpha}$ with constants $\rho_0$, $M_0$. Let $P_0 \in \Gamma$ such that \eqref{apriori2G} holds. By definition, after a suitable change of coordinates we have that $P_0 = 0$ and 
\begin{equation} 
E \cap B_{\rho_0}(0) = \{ (x^\prime, x_n) \in E  \, \text{ s.t.} \, x_n>\varphi(x^\prime)  \} \subset E,
\end{equation}
where $\varphi$ is a $C^{2,\alpha}(B^\prime_{\rho_0}(0))$ function satisfying 
\begin{displaymath}
 \begin{split}
  \varphi(0)&=0, \\ 
|\nabla \varphi (0)|&=0, \\ 
\|\varphi \|_{C^{2,\alpha} (B^\prime_{\rho_0}(0))}& \le M_0 \rho_0.
 \end{split}
\end{displaymath}
 Define
\begin{equation} \begin{split} \label{rho00}
 \rho_{00} & = \frac{\rho_0}{\sqrt{1+M_0^2}}, \\ \Gamma_0 & = \{ (x^\prime, x_n)   \in \Gamma \, \, \mathrm{s.t.} \, \, |x^\prime|\le \rho_{00}, \, \,  x_n = \varphi(x^\prime) \}.
\end{split} \end{equation}
In what follows we shall consider $(u_i, p_i)$, for $i=1,2$, which are solutions to the following problems:
\begin{equation}
  \label{sisdiff} \left\{ \begin{array}{rl}
    \dive \sigma(u_i,p_i) &= (u_i \cdot \nabla) u_i \hspace{2em} \mathrm{\tmop{in}} \hspace{1em}
    E,\\
    \dive u_i & = 0 \hspace{2em} \mathrm{\tmop{in}} \hspace{1em} E,\\
    u_i & = g \hspace{2em} \mathrm{\tmop{on}} \hspace{1em} \Gamma,\\
    \sigma (u_i, p_i) \cdot \nu & = \psi_i \hspace{2em} \mathrm{\tmop{on}}
    \hspace{1em} \Gamma,\\  \end{array} \right.
\end{equation}
where $g$ satisfies \eqref{apriori5POS}-\eqref{apriori7POS}, $\psi_i \in \mathbf{C}^{1, \alpha} (\Gamma)$ and we use the same notations as we did in Section 4 (which are also to be understood in what follows).
Define $w =u_1-u_2$, $q=p_1-p_2$, these will solve a system of the following form: 
\begin{equation}
  \label{sisw} \left\{ \begin{array}{rl}
    \dive \sigma(w,q) &= A \cdot \nabla w + B w \hspace{2em} \mathrm{\tmop{in}} \hspace{1em}
    E,\\
    \dive w & = 0 \hspace{2em} \mathrm{\tmop{in}} \hspace{1em} E,\\
    w & = 0 \hspace{2em} \mathrm{\tmop{on}} \hspace{1em} \Gamma,\\
    \sigma (w, q) \cdot \nu & = \psi_0 \hspace{2em} \mathrm{\tmop{on}}
    \hspace{1em} \Gamma,\\  \end{array} \right.
\end{equation}
where $A$ and $B$ were explicitated in \eqref{ellgencomp} and $\psi_0=\psi_1-\psi_2$.
We have the following estimate for a solution of systems of the form \eqref{sisw}, the proof of which is delayed to the next section:
\begin{theorem} \label{stabilitycauchy}
Let $A$ be a vector of $C^{0, \alpha}(E)$ of constants $\rho_0$, $M_0$, let $B$ a matrix of class $C^{0,\alpha}(E)$ of constants $\rho_0$, $M_0$, satisfying 
\begin{equation}
\| A \|_{C^{0,\alpha}}+\| B \|_{C^{0,\alpha}} = \en,
\end{equation}
 and let $(w,q)$ be a solution of class $\mathbf{C}^{1,\alpha}(E) \times C^0(E)$ to the problem:
\begin{equation}
  \label{NseHomDir} \left\{ \begin{array}{rl}
    \dive \sigma(w,q) &= A \cdot \nabla w + B w  \hspace{2em} \mathrm{\tmop{in}} \hspace{1em}
   E,\\
    \dive w & = 0 \hspace{2em} \mathrm{\tmop{in}} \hspace{1em} E,\\
    w & = 0 \hspace{2em} \mathrm{\tmop{on}} \hspace{1em} \Gamma,\\
    \sigma (w, q) \cdot \nu & = \psi \hspace{2em} \mathrm{\tmop{on}}
    \hspace{1em} \Gamma,\\  \end{array} \right.
\end{equation}
where $\psi \in \mathbf{H}^{-\frac{1}{2}} (\Gamma)$ and $\Gamma \subset \partial E$ is of class $C^{1,\alpha}$. 
Then there exists $\widehat{\rho}$, only depending on $M_0$, $\alpha$ and $\en$, such that, letting $P^* = P_0 + \frac{\widehat{\rho}}{4} \nu$ where $\nu$ is the outer normal field to $\partial E$, we have: 
\begin{equation} \label{NseHomDirEqn}
\| w \|_{{\bf L}^\infty(E \cap B_{\frac{3 \widehat{rho}}{8}} (P^*))} \leq \frac{C}{\rho_0^{\frac{n}{2}}} \normadue{w}{E}^{1-\tau} (\rho_0 \|\psi  \|_{{\bf H}^{-\frac{1}{2}} (\Gamma)})^\tau,
\end{equation}
where $\tau$ only depends on $\alpha$ and $M_0$ and $C>0$ also depends on $\en$. 
\end{theorem}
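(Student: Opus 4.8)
The strategy is an extension-plus-three-spheres argument of the kind used in \cite{MEE}, \cite{MRC}, \cite{MR}. First I would exploit the homogeneous Dirichlet condition $w=0$ on $\Gamma$ to extend $(w,q)$ across the flat portion of the boundary. After the change of coordinates that straightens $\Gamma$ near $P_0$, one reflects $w$ (and $q$) in the $x_n$-variable by an odd/even reflection adapted to the system; since $w$ vanishes on $\Gamma\cap B_{\rho_0}$ together with the fact that the equation is of the form \eqref{sisw}, the extended field $\widetilde w$ solves a system of the same type \eqref{ellgeneral} in a full ball $B_{\widehat\rho}(P^*)$, with coefficients still bounded in $C^{0,\alpha}$ by a constant controlled by $\en$, $M_0$, $\alpha$ — here the Cauchy datum $\psi$ on $\Gamma$ is what allows the Neumann part of the trace to match up so that the reflected field is an honest weak (hence, by interior regularity, $C^{1,\alpha}$) solution. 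The radius $\widehat\rho$ is chosen small enough, depending only on $M_0$, $\alpha$, $\en$, that $B_{\widehat\rho}(P^*)\Subset \widetilde E$, the extended domain.

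Second, having $\widetilde w$ solving \eqref{ellgeneral} in $B_{\widehat\rho}(P^*)$, I would apply the three spheres inequality, Theorem \ref{teotresfere}, on a chain of balls centered at $P^*$ (or along a path from $P^*$ into the interior of $E$). This gives an interpolation inequality
\[
\int_{B_{r_2}(P^*)}|\widetilde w|^2 \le C\Big(\int_{B_{r_1}(P^*)}|\widetilde w|^2\Big)^\delta\Big(\int_{B_{r_3}(P^*)}|\widetilde w|^2\Big)^{1-\delta}.
\]
The small ball $B_{r_1}(P^*)$ lies in the region where, by the reflection, $|\widetilde w|$ is controlled by the Cauchy data: indeed on $\Gamma$ one has $w=0$ and $\sigma(w,q)\cdot\nu=\psi$, so a standard boundary estimate (trace theorem, Caccioppoli) bounds $\int_{B_{r_1}(P^*)\cap E}|w|^2 + \int|\nabla w|^2$ by $\rho_0^{?}\,\|\psi\|_{\mathbf H^{-1/2}(\Gamma)}^2$ up to the $\mathbf L^2$-norm of $w$ in $E$; the reflected part contributes the same quantity. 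The large ball $B_{r_3}(P^*)$ is absorbed into $\normadue{w}{E}^2$ (plus the reflected copy, again comparable). Thus \eqref{tresferediff}-type interpolation yields $\int_{B_{r_2}(P^*)}|\widetilde w|^2 \le C\,\normadue{w}{E}^{2(1-\delta)}(\rho_0\|\psi\|_{\mathbf H^{-1/2}(\Gamma)})^{2\delta}$, with $\delta$ depending on the fixed radius ratios, hence on $M_0$, $\alpha$.

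Third, I would upgrade this $\mathbf L^2$ bound on a ball to the claimed $\mathbf L^\infty$ bound on the slightly smaller set $E\cap B_{3\widehat\rho/8}(P^*)$. This is where the regularity assumption $(w,q)\in \mathbf C^{1,\alpha}\times C^0$ enters: interior Schauder/$\mathbf L^\infty$–$\mathbf L^2$ estimates for the linearized Navier–Stokes system \eqref{ellgeneral} (of the type underlying Theorem \ref{teoschauder}) give $\|\widetilde w\|_{\mathbf L^\infty(B_{3\widehat\rho/8}(P^*))}\le C\rho_0^{-n/2}\normadue{\widetilde w}{B_{r_2}(P^*)}$, with $C$ depending on $\en$, $M_0$, $\alpha$. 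Combining with the previous step and setting $\tau=\delta$ gives \eqref{NseHomDirEqn}; a final dimensional bookkeeping fixes the powers of $\rho_0$ so that both sides are dimensionally homogeneous, using the normalized norms of the Remark.

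**Main obstacle.** The delicate point is the extension/reflection step: unlike a scalar equation, reflecting a divergence-free vector field while keeping the system in the form \eqref{ellgeneral} and keeping the coefficients in $C^{0,\alpha}$ is not automatic — one must reflect the components of $w$ with mixed parities (odd in the tangential components, even in the normal one, say) and simultaneously reflect $q$, and then verify that the pair $(\widetilde w,\widetilde q)$ is a weak solution across $\Gamma$, which uses \emph{both} Cauchy conditions $w|_\Gamma=0$ and $\sigma(w,q)\cdot\nu|_\Gamma=\psi$ (with $\psi$ appearing as a right-hand side supported on the interface, then absorbed). Controlling the $C^{0,\alpha}$ norms of the reflected coefficients $A$, $B$, and ensuring the incompressibility constraint survives reflection, is the technical heart; the three-spheres and Schauder steps are then routine. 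I expect the full argument for this step to occupy the next section, following the corresponding construction in \cite{MEE} closely.
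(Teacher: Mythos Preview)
Your overall architecture (extend across $\Gamma$, apply the three spheres inequality, upgrade $\mathbf L^2$ to $\mathbf L^\infty$) is correct, but the extension step as you describe it will not deliver the crucial smallness on the inner ball, and this is where the argument breaks.

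The paper does \emph{not} reflect $w$; it extends $w$ and $q$ by \emph{zero} into the exterior region $E^-$ (this is legitimate in $\mathbf H^1$ precisely because $w|_\Gamma=0$). The extended pair $(\tilde w,\tilde q)$ then satisfies, weakly in $\tilde E$, an \emph{inhomogeneous} system
\[
\dive\sigma(\tilde w,\tilde q)=\tilde A\cdot\nabla\tilde w+\tilde B\tilde w+\Phi,\qquad \dive\tilde w=0,
\]
where $\Phi$ is a distribution supported on $\Gamma$ with $\|\Phi\|_{\mathbf H^{-1}(\tilde E)}\le C\|\psi\|_{\mathbf H^{-1/2}(\Gamma)}$; this is exactly where the Neumann datum enters. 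One then solves, on a fixed ball $B_{\widehat\rho}$, a Dirichlet problem for the same operator with right-hand side $\Phi$, obtaining a particular solution $w^*$ with $\|w^*\|_{\mathbf H^1_0(B_{\widehat\rho})}\le C\rho_0\|\psi\|_{\mathbf H^{-1/2}}$ (here $\widehat\rho$ is chosen small, depending on $\en$, so that the lower-order terms are perturbative and the problem is coercive). Writing $\tilde w=w_0+w^*$, the difference $w_0$ solves the \emph{homogeneous} linearized system, so Theorem~\ref{teotresfere} applies to $w_0$. The centre $P^*=P_0+\tfrac{\widehat\rho}{4}\nu$ lies in $E^-$, and $B_{r_1}(P^*)\subset E^-$ for $r_1=\widehat\rho/8$; there $\tilde w\equiv0$, hence $\|w_0\|_{\mathbf L^2(B_{r_1})}=\|w^*\|_{\mathbf L^2(B_{r_1})}\le C\eta$ with $\eta=\rho_0\|\psi\|_{\mathbf H^{-1/2}}$. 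Triangle inequalities on the middle and outer balls then give \eqref{NseHomDirEqn}.

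Your reflection scheme cannot produce this smallness: an odd/even reflection carries the size of $w$ from $E$ into $E^-$, so $\|\tilde w\|_{\mathbf L^2(B_{r_1})}$ is comparable to $\|w\|_{\mathbf L^2}$ on a neighbourhood of $\Gamma$ inside $E$, and nothing in ``trace theorem, Caccioppoli'' will bound that by $\|\psi\|_{\mathbf H^{-1/2}}$ alone --- any such estimate necessarily contains $\|w\|_{\mathbf L^2(E)}$ with a coefficient of order one, which destroys the interpolation. The missing idea is precisely the zero-extension followed by the homogeneous/particular splitting; once you have that, the three-spheres and $\mathbf L^\infty$ steps go through essentially as you outline (the interior $\mathbf L^\infty$--$\mathbf L^2$ estimate being applied to $w_0$, which solves a homogeneous system, not to $\tilde w$).
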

\begin{proof}[Proof of Proposition \ref{teostabest}]
Let $\theta= \mathrm{min} \{a, \frac{7}{8 \gamma_1} \frac{1}{2\gamma_0 (1+M_0^2)} \}$ where $a$, $\gamma_0$, $\gamma_1$ are the constants depending only on $M_0$ and $\alpha$ introduced in Lemma \ref{regularized}, then let $\overline{\rho}= \theta \rho_0$ and fix $\rho \le \overline{\rho}$.
Let $D_1^\rho$, $D_2^\rho$ be the regularized domain associated with $D_1$, $D_2$ respectively, built according to Lemma \ref{regularized}. Let $G$ be the connected component of $\Omega\setminus(\overline{D_1 \cup D_2})$ containing $\partial \Omega$, and $G^\rho$ be the connected component of $\overline{\Omega}\setminus(D_1^\rho \cup D_2^\rho)$ which contains $\partial \Omega$.
We have  that \begin{equation*}
D_2 \setminus \overline{D_1} \subset \Omega_1 \setminus \overline{G} \subset \big( (D_1^\rho \setminus \overline{D_1} ) \setminus\overline{G}\big) \cup \big( (\Omega \setminus G^\rho)\setminus D_1^\rho \big)
\end{equation*} 
and 
\begin{equation} \label{decomposizione}
\partial \big( (\Omega \setminus G^\rho)\setminus D_1^\rho \big) = \Gamma_1^\rho \cup \Gamma_2^\rho,
\end{equation}
where $\Gamma_2^\rho= \partial D_2^\rho \cap \partial G^\rho$ and $\Gamma_1^\rho \subset \partial D_1^\rho$. Then 
\begin{equation} \label{652} \int_{D_2 \setminus \overline{D_1 }} |\nabla u_1|^2 \le \int_{\Omega_1 \setminus \overline{G}} |\nabla u_1|^2 \le \int_{(D_1^\rho \setminus \overline{D_1} )\setminus\overline{G}} |\nabla u_1|^2 +\int_{(\Omega \setminus G^\rho)\setminus D_1^\rho} |\nabla u_1|^2. \end{equation} 
The first term can be estimated directly, using \eqref{schauder1} and \eqref{645} we have 
\begin{equation} \label{6.53} \int_{(D_1^\rho \setminus \overline{D_1} )\setminus\overline{G}} |\nabla u_1|^2 \le C \rho_0^{n-2} \gio^2 \frac{\rho}{\rho_0} \end{equation}
where $C$ only depends on the $M_0$, $M_1$, $\alpha$ and $\mu$.
We call $\Omega(\rho)= (\Omega \setminus G^\rho)\setminus D_1^\rho$. We use the first equation in \eqref{NSE}, multiply it by $u_1$ and integrate over $\Omega(\rho)$ to derive the following identity:
\begin{equation*}
\begin{split} 
 0 &= \int_{\Omega(\rho)} \mu u_1 \cdot  \triangle u_1 - u_1 (u_1 \cdot \nabla) u_1 - u_1 \cdot \nabla p_1 = \\ 
   &= \int_{\Omega(\rho)} \mu u_1 \cdot  \triangle u_1 - \frac{1}{2} \dive (u_1 |u_1|^2) - u_1 \cdot \nabla p_1   = \\ 
   &= \int_{\partial\Omega(\rho)} \mu (\nabla u_1 \cdot \nu) u_1  - \frac{1}{2} u_1 \cdot \nu |u_1|^2 - (u_1 \cdot \nu) p - \mu \int_{\Omega(\rho)} |\nabla u_1|^2, 
\end{split} 
\end{equation*} 
which yields, recalling \eqref{decomposizione}, 
\begin{equation} \label{sommandi} 
\begin{split}
 \mu \int_{\Omega(\rho)} |\nabla u_1|^2 =&   \int_{\Gamma_1^\rho}  \mu (\nabla u_1 \cdot \nu) u_1  - \frac{1}{2} u_1 \cdot \nu |u_1|^2 - (u_1 \cdot \nu) p\, + \\+&  \int_{\Gamma_2^\rho} \mu (\nabla u_1 \cdot \nu) u_1  - \frac{1}{2} u_1 \cdot \nu |u_1|^2 - (u_1 \cdot \nu) p  . 
\end{split}
\end{equation}
We start by estimating the first integral on the right hand side of \eqref{sommandi}. If $x \in \Gamma_1^\rho$, by Theorem \ref{regularized}, we find $y \in \partial D_1$ such that $|y-x|= d(x, \partial D_1) \le \gamma_1 \rho$; since $u_1(y)=0$, by Lemma \ref{teoschauder} we have 
\begin{equation} \label{pezzobuono}  |u_1(x)|= |u_1(x)-u_1(y)|\le  C \frac{\rho}{\rho_0}  \gio . \end{equation}
On the other hand, if $x \in \Gamma_2^\rho$, there exists $y \in D_2$ such that $|y-x| = d(x, \partial D_2) \le \gamma_1 \rho$. Again, since $u_2(y)=0$, we have 
\begin{equation} \label{pezzocattivo} \begin{split} & |u_1(x)| \le  |u_1(x)-u_1(y)|+|u_1(y)-u_2(y) |   \\ 
& \le C  \big( \frac{\rho}{\rho_0} \gio + \max_{\partial G^\rho \setminus \partial \Omega} |w| \big) , \end{split}\end{equation} 
where $w=u_1-u_2$.  Combining \eqref{pezzobuono}, \eqref{pezzocattivo} and \eqref{sommandi} and recalling \eqref{schauder1} and \eqref{646} we have:
\begin{equation} \label{sommandi2}
\int_{D_2\setminus D_1} |\nabla u_1|^2  \le \frac{C}{\mu}\rho_0^{n-2}( \gio +1+\mu) \Big( \gio^2 \frac{\rho}{\rho_0} + \gio  \max_{\partial G^\rho \setminus \partial \Omega} |w|  \Big)
\end{equation}
We now need to estimate $\max_{\partial G^\rho \setminus \partial \Omega} |w| $.  We will do so by means of Proposition \ref{tredifferenze}. Take  $ x \in \partial G^\rho \setminus \partial \Omega$ and define \begin{equation} \label{rhostar} \rho^*=\min \left\{ \frac{\rho_0}{16(1+M_0^2)}, \frac{\widehat{\rho}}{16}   \right\}, \end{equation}
\begin{equation}\label{zetazero}
x_0= P_0 - \frac{\min \{\rho_{00},\widehat{\rho}  \}}{16}\nu,
\end{equation}
where $\nu$ is the outer normal to $\partial \Omega$ at the point $P_0$. By construction $x_0 \in \overline{\til{\Omega}_{\frac{\rho^*}{2}}}$. 
There exists an arc $\gamma:  [0,1]  \mapsto G^\rho \cap \overline{\til{\Omega}_{\frac{\rho^*}{2}}} $ such that $\gamma(0)=x_0$, $\gamma(1)=x$ and $\gamma([0,1])\subset G^\rho \cap \overline{\til{\Omega}_{\frac{\rho^*}{2}}}$. Let us define \begin{equation}
\rho_3= \min \{ \gamma_0 \rho \vartheta^*, \widehat{\rho} \}, \; \rho_2= \frac{3}{4} \rho_3, \; \rho_1=\frac{1}{4} \rho_3,       \end{equation}
where $\vartheta^*$ is the constant given in Theorem \ref{teotresfere}.
We pick a sequence of $S+1$  times $t_i$ and points $x_i= \gamma(t_i)$,  $i=0 \dots S$, defined by the following construction. Call $t_0=0$, then:
\begin{displaymath}  \begin{split}
t_i&= \mathrm{max} \big\{ t\in (0,1]  \text{ s.t. } |\gamma(t)- x_i| = \frac{\rho_3}{2} \big\}, \text{ if } |x_i-x| > \frac{\rho_3}{2}
, \\
&\text{otherwise,   } i=S,
\end{split}\end{displaymath}
and stop the process.  The number of spheres is bounded by 
\begin{displaymath} S\le C \Big( \frac{\rho_0}{\rho} \Big)^n \end{displaymath} where $C$ only depends on $\alpha$, $M_0$, $M_1$ and $\en$.
The balls $B_{\frac{\rho_3}{2}}(x_i)$ are pairwise disjoint, the distance between two consecutive centers is given by 
\begin{displaymath}| x_{i+1}-x_i | =\frac{\rho_3}{2}, \, \; i=0 \dots S-1, \, \;  |x_S - x| \le \frac{\rho_3 }{2}. \end{displaymath}  
We iterate the three spheres inequality \eqref{tresferediff} on a chain of spheres with radii $\rho_1$, $\rho_2$ and $\rho_3$, this leads us to 
\begin{equation} \label{iteratresfere}  \int_{B_{\rho_2} (x)} \! | w|^2 dx \le C \Big(\int_G  \! | w|^2 dx \Big)^{1-\delta^S}  \Big(\int_{B_{\rho_3}(x_0)} \! | w |^2 dx \Big)^{\delta^S} \end{equation}
where $0<\delta<1$ and $C>0$ only depend on $M_0$, $\alpha$ and $\en$. From our choice of $\bar{\rho}$ and $\vartheta^*$, it follows that $B_{\rho_1}(x_0) \subset B_{\rho^*}(x_0) \subset G \cap B_{\frac{3 \rho_1 }{4}}(P^*)$, where we follow the notations from Theorem \ref{stabilitycauchy}. 
We may apply Theorem \ref{stabilitycauchy} to $w$; thus, using \eqref{NseHomDirEqn}, \eqref{stimanormadiretto}, \eqref{HpPiccolo} and \eqref{aprioriE} on \eqref{iteratresfere} we have:
\begin{equation} \label{pallina}
 \int_{B_{\rho_2}(x)} \! | w|^2 dx \le C \rho_0^{n-2} \epsilon^{2 \tau \delta^S}.
\end{equation}
The following interpolation inequality holds for all functions $v$ defined on the ball $B_t(x) \subset \mathbb{R}^n$:
\begin{equation} \label{interpolation}
\|v \|_{\mathbf{L}^\infty (B_t(x))} \le C \Big( \Big(  \int_{B_t(x)} | v|^2 \Big)^{\frac{1}{n+2}} |\nabla v|^{\frac{n}{n+2}}_{\mathbf{L}^\infty (B_t(x))} +   \frac{1}{t^{n/2}} \Big( \int_{B_t(x)} | v|^2 \Big)^{\frac{1}{2}} \Big)
\end{equation}
We apply it to $w$ in $B_{\rho_2}(x)$, using \eqref{pallina} and \eqref{schauder1} we obtain
\begin{equation} \label{stimaw}
\| w \|_{\mathbf{L}^\infty (B_{\rho_2}(x))} \le C \Big( \frac{\rho_0}{\rho} \Big)^{\frac{n}{2}}  \epsilon^{\gamma  \delta^S},
\end{equation}
where $\gamma=\frac{2\tau}{n+2}$. 
Finally, from \eqref{stimaw} and \eqref{sommandi2}, and recalling \eqref{aprioriE} we get:
\begin{equation} \label{sommandi3} 
\int_{D_2\setminus D_1} |\nabla u_1|^2  \le C \rho_0^{n-2}  \Big( \frac{\rho}{\rho_0} + \Big( \frac{\rho_0}{\rho} \Big)^{\frac{n}{2}} \epsilon^{\gamma \delta^S}  \Big),
\end{equation}
with $C$ only depending on $M_0$, $\alpha$, $\mu$ and $\en$.
Let us now choose $\rho$ depending upon $\epsilon$, of the form 
\begin{displaymath}
 \rho(\epsilon) = \rho_0 \Bigg( \frac{2S \log |\delta|}{\log |\log \epsilon^\gamma|} \Bigg)^{-\frac{1}{n}}.
\end{displaymath}
We have that $\rho$ is defined and increasing in the interval $(0, e^{-1})$. Call $\overline{\zeta}$ the number such that $\rho(\overline{\zeta}) = \min \{ \overline{\rho}, \widehat{\rho} \}$, and let $\til{\zeta}= \min \{\overline{\zeta}, \exp (-\gamma^2) \}$. Since the thesis is trivial for larger values of $\epsilon$, it is not restrictive to prove it only in the smaller interval $(0, \tilde{\zeta})$. We are able to apply \eqref{sommandi3} to \eqref{652} with $\rho=\rho(\epsilon)$ for $ \epsilon \in (0, \tilde{\zeta})$ to obtain 
\begin{equation}
\label{quasifinito}
\int_{D_2 \setminus D_1} |\nabla u_1|^2 \le  C \rho_0^{n-2}  \log |\log \epsilon|^\gamma,
\end{equation}
and since $\epsilon \le \exp(-\gamma^2)$  it is elementary to prove that \begin{displaymath}
\log |\log \epsilon^\gamma| \ge \frac{1}{2} \log | \log \epsilon|,
\end{displaymath}
so that \eqref{quasifinito} finally reads 
\begin{displaymath}
\int_{D_2 \setminus D_1} |\nabla u_1|^2 \le  C \rho_0^{n-2} \,\omega(\epsilon),
\end{displaymath}
with  $\omega(t) = \log |\log t|^{\frac{1}{n}}$ defined for all $0<t<e^{-1}$, and $C$ depends on $M_0$, $M_1$, $\alpha$, $\mu$ and $\en$.
\end{proof}
\begin{proof}[Proof of Proposition \ref{teostabestimpr}]
We will prove the thesis for $u_1$, the case $u_2$ being completely analogous. 
First of all, we observe that
\begin{equation} 
\label{sommandiB}
 \int_{D_2 \setminus D_1} \mu |\nabla u_1|^2  \le  \int_{\Omega_1 \setminus G} \mu |\nabla u_1|^2   = \int_{\partial (\Omega_1 \setminus G)} \mu (\nabla u_1 \cdot \nu) u_1 -  p_1 ( u_1 \cdot \nu)  - \frac{1}{2} u_1\cdot \nu |u_1|^2,
\end{equation}
and that 
\begin{equation*}
\partial (\Omega_1 \setminus G) \subset \partial D_1 \cup (\partial D_2 \cap \partial G).
\end{equation*}
If we recall the no-slip condition, apply to \eqref{sommandiB} computations similar to those in \eqref{652}, \eqref{6.53}, we get
\begin{equation} \label{sommandiagain}
\int_{D_2 \setminus D_1} |\nabla u_1|^2  \le \frac{C}{\mu}\rho_0^{n-2} (\gio + 1+ \mu)     \max_{\partial D_2 \cap \partial G} |w|,
\end{equation}
where again $w= u_1 - u_2$, and $C$ only depends on $\alpha$, $M_0$ and $M_1$. Take a point $z \in \partial G$. To evaluate  $\max |w|$ on ${\partial D_2 \cap \partial G}$, we start by choosing a point $z \in \partial G$ and estimating  $\| \nabla u \|$ on a ball centered in $z$, in terms of $\| \nabla u \|$ evaluated on a ball centered in $x_0$. We will do so by applying iteratevely the three spheres inequality, twice.   By exploiting the regularity assumptions on $\partial G$, we find a 
cone centered in $z$, which we denote by $C(z, \xi, \vartheta_0)$ (where $\vartheta_0 =\arctan \frac{\rho_0}{M_0}$ is  half the aperture of the cone and $\xi \in \mathbb{R}^{n}$ is a unit vector representing the direction of the cone),  such that 
$C(z, \xi, \vartheta_0) \cap B_{\tilde{\rho}_0} (z) \subset G$. It can be shown (\cite[Proposition 5.5]{ARRV}) that $G_\rho$ is connected for $\rho \le \frac{\tilde{\rho}_0 h_0 }{3}$ with $h_0$ only depending on $M_0$. 
We now claim (without proof, see \cite{MEE} and \cite{MRC} for the detailed constructions in the same context) that we may build $\lambda_1 >0$ and $\theta_1 >0$, such that, if we define
\begin{equation*}\begin{split}
w_1 &=z+ \lambda_1 \xi, \\ 
\rho_1 &= \vartheta^* h_0 \lambda_1 \sin \vartheta_1. 
\end{split}\end{equation*}
where $0<\vartheta^*\le 1$ was introduced in Theorem \ref{teotresfere}, then the following claims hold: $B_{\rho_1}(w_1) \subset C(z, \xi, \vartheta_1) \cap B_{\tilde{\rho}_0}(z)$ and $B_{\frac{4 \rho_1}{\vartheta^*}}(w_1) \subset C(z, \xi, \vartheta_0) \cap B_{\tilde{\rho}_0}(z) \subset G$. Furthermore $\frac{4 \rho_1}{\vartheta^*} \le  \rho^*$, hence $B_{\frac{4 \rho_1}{\vartheta^*}} (x_0) \subset G$, where $\rho^*$ and $x_0$ were defined by \eqref{rhostar} and \eqref{zetazero} respectively. It follows that $w_1$, $x_0 \in \overline{G_{\frac{4\rho_1}{\vartheta^*}}}$, which is connected by construction.
Iterating the three spheres inequality \eqref{tresferediff} (mimicking the construction made in the previous proof) 
\begin{equation} \label{iteratresferei}  \int_{B_{\rho_1} (w_1)} \! | w|^2 dx \le C \Big(\int_G  \! | w|^2 dx \Big)^{1-\delta^S}  \Big(\int_{B_{\rho_1 }(x_0)} \! | w |^2 dx \Big)^{\delta^S} \end{equation}
where $0<\delta<1$ and $C \ge 1$ depend only on $n$ and $\en$, and $S \le \frac{M_1 \rho_0^n}{\omega_n \rho_1^n}$. 
We apply Theorem \ref{stabilitycauchy} in the same fashion as the previous proof, which leads to 
\begin{equation*}
\int_{B_{\rho_1}(w_1)} |w|^2 \le C \rho_0^n  \epsilon^{2\beta},
\end{equation*}
where $0<\beta<1$ and $C >0$ only depend on $\alpha$, $M_0$, $\en$ and $\frac{\tilde{\rho}_0}{\rho_0}$. 
So far the estimate we have is only on a ball centered in $w_1$, we need to approach $z \in \partial G$ using a sequence of balls, all contained in $C(z, \xi, \vartheta_1)$, by suitably shrinking their radii. Take 
\begin{equation*}
\chi = \frac{1-\sin \vartheta_1}{1+\sin\vartheta_1}
\end{equation*}
and define, for $k \ge 2$,
\begin{equation*} \begin{split}
\lambda_k&=\chi \lambda_{k-1}, \\
\rho_k&= \chi \rho_{k-1}, \\
w_k &= z + \lambda_k \xi. \\
\end{split}\end{equation*}
With these choices, $\lambda_k= \lambda \chi^{k-1} \lambda_1$, $\rho_k=\chi^{k-1} \rho_1$ and $B_{\rho_{k+1}}(w_{k+1}) \subset B_{3\rho_k}(w_k)$, $B_{\frac{4}{\vartheta^*}\rho_k}(w_k) \subset C(z, \xi, \vartheta_0) \cap B_{\tilde{\rho}_0}(z) \subset G$.
Denote by
\begin{displaymath}
d(k)= |w_k-z|-\rho_k,
\end{displaymath}
we also have
\begin{displaymath}
d(k)= \chi^{k-1}d(1),
\end{displaymath}
with
\begin{displaymath}
d(1)= \lambda_1(1-\vartheta^* \sin \vartheta_1).
\end{displaymath}
Now take any $\rho \le d(1)$ and let $k=k(\rho)$ the smallest integer such that $d(k) \le \rho$, explicitly
\begin{equation} \label{chirho}
\frac{\big|\log \frac{\rho}{d(1)}\big|}{\log \chi} \le k(\rho)-1 \le \frac{| \log \frac{\rho} {d(1)}|}{\log \chi}+1.
\end{equation}
We iterate the three spheres inequality \eqref{tredifferenze} over the chain of balls centered in $w_j$ and radii $\rho_j$, $3 \rho_j$, $4\rho_j$, for $j=1, \dots, k(\rho)-1$, which yields
\begin{equation} \label{iteratresferetre}
\int_{B_{\rho_{k(\rho)}}(w_{k(\rho)})} |w|^2 \le C  \rho^n \epsilon^{2 \beta \delta^{k(\rho)-1}},
\end{equation}
with $C$ only depending on $\alpha$, $M_0$, $\en$ and $\frac{\tilde{\rho}_0}{\rho_0}$.
Using the interpolation inequality \eqref{interpolation} and \eqref{schauder1} we obtain 
\begin{equation}\label{543}
\|w \|_{\mathbf{L}^\infty (B_{\rho_{k(\rho)}}(w_{k(\rho)}))} \le C  \frac{\epsilon^{\beta_1 \delta^{k(\rho)-1}}}{\chi^{\frac{n}{2}(k(\rho)-1)}},
\end{equation}
where $\beta_1=\frac{2 \beta}{n+2}$ depends only on $\alpha$, $M_0$, $M_1$, $\en$ and $\frac{\tilde{\rho}_0}{\rho_0}$.
From \eqref{543}) and \eqref{schauder1} we obtain 
\begin{equation} \label{544}
|w(z) | \le C \Bigg( \frac{\rho}{\rho_0} +\frac{\epsilon^{\beta_1 \delta^{k(\rho)-1}}}{\chi^{\frac{n}{2}(k(\rho)-1)}} \Bigg),
\end{equation}
Finally, call
\begin{displaymath}
\rho(\epsilon)= d(1) |\log \epsilon^{\beta_1}|^{-B},
\end{displaymath}
with
\begin{displaymath}
B= \frac{|\log \chi|}{2 \log |\delta|}.
\end{displaymath}
and let $\tilde{\zeta} = \exp(-\beta_1^{-1})$. We have that $\rho(\epsilon)$ is monotone increasing in the interval $0<\epsilon < \tilde{\zeta}$, and $\rho(\tilde{\zeta})=d(1)$, so $\rho(\epsilon) \le d(1)$ there.  By choosing $\rho=\rho(\epsilon)$ from \eqref{544} and \eqref{sommandiagain} we obtain
\begin{equation}
\int_{D_2 \setminus D_1} |\nabla u_1|^2 \le C \rho_0^{n-2}
|\log \epsilon|^{-B},
\end{equation}
where $C$ only depends on $\mu$, $\alpha$, $M_0$, $\en$ and $\frac{\til{\rho}_0}{\rho_0}$.
\end{proof}

\section{Proof of Theorem \ref{stabilitycauchy}. }
This section is entirely devoted to the proof of Theorem \ref{stabilitycauchy}. Let us start by introducing some notations.
We define  $$Q(P_0) = B^\prime_{\rho_{00}} (0) \times \Big[-\frac{M_0\rho_0^2}{\sqrt{1+M_0^2}}, \frac{M_0\rho_0^2}{\sqrt{1+M_0^2}}\Big],$$  
and 
\begin{equation}\label{gagrafico} \begin{split}
\Gamma_0 &= \partial E \cap Q(P_0). \\ 
\end{split}\end{equation}
Finally, let us call $E^- = Q(P_0) \setminus  E$ and $\til{E} = E \cup E^- \cup \Gamma_0$.
We start the proof by choosing a vector $\til{A}$ and a matrix $\til{B}$ such that $A=\til{A}$, $B=\til{B}$ in $E$, and 
\begin{equation} \label{ablim3}
\| \til{A} \|_{C^{1,\alpha}(\til{E})} + \| \til{B} \|_{C^{0,\alpha}(\til{E})}  \le C_1 \en,
\end{equation}
where $C_1 >0$ only depends on $\alpha$ and $M_0$. Our aim is to build an extension $\til{w}$ of $w$ such that it satisfies the extended problem
\begin{equation} 
\begin{split} 
\label{sistilde}
\dive \sigma (\til{w}, \til{q}) &=\til{A} \cdot \nabla \til{w}+\til{B} w+ \Phi \, \, \text{  in  } \, \, \til{E}, 
\\  \dive \til{u}&=0  \, \, \text{  in  } \, \, \til{E},
\end{split}
\end{equation}
where $\Phi \in \accan{-1}{\til{E}}$ is such that
\begin{equation} \label{stimaPhi}
\norma{\Phi}{-1}{\til{E}} \le C \| \psi \|_{\mathbf{H}^{-\frac{1}{2}}(\Gamma) }.
\end{equation}

Define $\til{w}$, $\til{q}$ as follows:
\begin{displaymath}\til{w}= \left\{ \begin{array}{rl} &  w \quad \text{ in } \; E, \\ & 0 \quad \text{ in } \; E^-. \end{array} \right. \end{displaymath}
\begin{displaymath}\til{q}= \left\{ \begin{array}{rl} &  q \quad \text{ in } \; E, \\ & 0 \quad \text{ in } \; E^-. \end{array} \right. \end{displaymath}
We have that $\til{w} \in \mathbf{H}^1 (\til{E})$ and that $\dive \til{w} =0$, in the weak sense in $\til{E}$.


In order to write a system \eqref{sistilde} for $(\til{w},\til{q})$, we take any $v \in \accano{1}{\til{E}}$ and consider
\begin{equation}
  \label{NSEEXT} 
\int_{\til{E}} \sigma (\til{w}, \til{q}) \cdot \nabla v 
= \int_{E} \sigma ( w, q) \cdot \nabla v + \int_{E^-} \sigma( \til{w}, \til{q} ) \cdot \nabla v.
\end{equation}
By the divergence theorem on the first term we obtain 
\begin{equation}
  \label{phiuno}  \int_{E} \sigma(w,q) \cdot \nabla v= -\int_{E} (A \cdot \nabla w) \cdot \nabla v - \int_{E}   B w \cdot \nabla v  + \int_{\Gamma} \psi \cdot v. \end{equation}
Define $ \Phi(v)= \int_{\Gamma} \psi \cdot v $ for all  $v \in \accano{1}{\til{E}}$.
Using \eqref{phiuno} and the trace theorem:
\begin{equation} \label{stimaphi1} \big| \Phi(v) \big| \le \norma{\psi}{-\frac{1}{2}}{\Gamma} \norma{v}{\frac{1}{2}}{\Gamma} \le C  \rho_0\psii \norma{v}{1}{E^-}, \end{equation}

therefore $\til{w}$ satisfies \eqref{sistilde} in the weak sense, with $\Phi$ such that \eqref{stimaPhi} holds. 
We now want to apply the three spheres inequality to the inhomogeneous system \eqref{sistilde}. In order to do so, we need to establish local well posedness for the Cauchy problem for the linearized Navier Stokes equations. 
We claim the following: there exists $\widehat{\rho}$ such that the problem
\begin{equation}
 \label{NSEPARTIC}
\left\{ \begin{array}{rl}
    \dive \sigma (w^*, q^*) &= \til{A} \cdot \nabla w^* + \til{B} w^* +\Phi \hspace{2em} \mathrm{\tmop{in}} \hspace{1em} B_{\widehat{\rho}}, \\ 
\dive w^* &=0 \hspace{2em} \mathrm{\tmop{in}} \hspace{1em} B_{\widehat{\rho}},\\
    w^* & = 0 \hspace{2em} \mathrm{\tmop{on}} \hspace{1em} \partial B_{\widehat{\rho}}.
  \end{array} \right.
\end{equation}
admits a weak solution $w^*$ in the ball $B_{\widehat{rho}}$ such that 
\begin{equation}
 \label{sispallastima}
\| w^*\|_{\mathbf{H}^1_0 (B_{\widehat{\rho}})} \le C \rho_0  \| \Phi\|_{\mathbf{H}^{-1} (E^-)},
\end{equation}
This can be shown by projecting \eqref{NSEPARTIC} on the space of divergence free function, so that it becomes a pressure free second order partial differential equation in $w^*$ only, with the principal part being the laplacian operator, and the lower order terms are continuous and bounded by a constant depending only on $\alpha$, $M_0$ and $\en$. Therefore the existence of a solution and the well posedness of \eqref{NSEPARTIC} is shown by a standard coercivity argument for sufficiently small radii.  Using linearity we write $\til{w}= w_0+w^*$,  with $w^* \in \accano{1}{B_{\widehat{\rho}}}$ such that $(w^*,q^*)$ solves \eqref{NSEPARTIC} and $w_0$ solves
\begin{equation}  \label{NSEHOM} 
\left\{ \begin{array}{rl}
    \dive \sigma (w_0, q_0) + \til{A} \cdot \nabla w_0 + \til{B} w_0 &= 0 \hspace{2em} \mathrm{\tmop{in}} \hspace{1em} B_{\rho_1}, \\ 
    \dive w_0 & = 0 \hspace{2em} \mathrm{\tmop{on}} \hspace{1em} B_{\rho_1}.\\
  \end{array} \right.
  \end{equation}
Using interior regularity of solutions for elliptic systems we get
\begin{equation} 
 \| w_0 \|_{{\bf L}^\infty( B_{\frac{t}{2}} (x))} \le t^{-\frac{n}{2}} \normadue{w_0}{B_{\frac{t}{2}}(x)}.
\end{equation}
We will thus need to estimate $\normadue{w_0}{B(x)}$ on a  ball near the boundary. 
We  may apply Theorem \ref{teotresfere} to $w_0$, thus, calling   and $r_3= \widehat{\rho}$,  $r_1= \frac{r_3}{8}$, $r_2= \frac{3  \, r_3}{8}$ we have (understanding that all balls are centered in $P_0$) 
\begin{equation} \label{3sfereu0} 
\normadue{w_0}{B_{r_2}} \le C \normadue{w_0}{B_{r_1}}^{\tau} \normadue{w_0}{B_{r_3}}^{1-\tau},
\end{equation}
with $C>0$ only depending on $n$ and $\en$. Let us call $\eta=\rho_0\norma{\psi}{-\frac{1}{2}}{\Gamma}$.
By the triangle inequality we have that
\begin{equation} \label{trin1}
\normadue{w_0}{B_{r}} \le \normadue{\til{w}}{B_{r}}+\normadue{w^*}{B_{r}} \le \normadue{\til{w}}{B_{r}}  + C \eta, 
\end{equation}
for $r=r_1,r_3$; furthermore, 
\begin{equation} \label{trin2}
\normadue{\til{w}}{B_{r_2}} \le \normadue{w_0}{B_{r_2}}+\normadue{w^*}{B_{r_2}} \le \normadue{w_0}{B_{r_2}}  + C \eta, 
\end{equation}
where (in both cases) $C>0$ only depends on $n$ and $\en$.
Putting together \eqref{3sfereu0}, \eqref{trin1}, \eqref{trin2} we get
\begin{equation} \begin{split} \label{3sfere2}
& \normadue{w}{B_{r_2}} \le \normadue{\til{w}}{B_{r_2} \cap E} \le \\  \le & C \eta + C (\normadue{\til{w}}{B_{r_1}}+ C \eta)^{\tau}  (\normadue{\til{w}}{B_{r_3} \cap E} + C \eta )^{1-\tau} \le \\ \le & C \big( \eta + \eta^\tau (\eta + \normadue{w}{E} )^{1-\tau} \big) \le C \eta^\tau \normadue{w}{E}^{1-\tau}.  \end{split} \end{equation}

\end{document}